\newcommand{\diam}{\text{diam}}
\newcommand{\dist}{\ensuremath{\text{dist}}}
\newtheorem{theorem}{Theorem}[section]
\newtheorem{example}[theorem]{Example}
\newtheorem{lemma}[theorem]{Lemma}
\newtheorem{corollary}[theorem]{Corollary}
\theoremstyle{definition}
\newtheorem{definition}[theorem]{Definition}
\theoremstyle{remark}
\newtheorem{remark}[theorem]{Remark}
\newtheorem{observation}[theorem]{Observation}
\newcommand{\R}{\mathbb{R}}
\newcommand{\N}{\mathbb N}
\newcommand{\Q}{\mathbb Q}
\begin{document}

\title{Small sets containing any pattern}%

\author[U. Molter and A. Yavicoli]{URSULA MOLTER and ALEXIA YAVICOLI} \thanks{The research for this paper was partially supported by grants UBACyT 2014-2017 20020130100403BA, PIP 11220110101018 (CONICET) and PICT 2014 - 1480, MinCyT}

\address{Departamento de Matem\'{a}tica and IMAS/UBA-CONICET\\
Facultad de Cs. Exactas y
Naturales, Universidad de Buenos Aires \\
Ciudad Universitaria, Pab. I \\
(1428) CABA, Argentina.\\
{\bf Email:} umolter@dm.uba.ar \ ayavicoli@dm.uba.ar}

\begin{abstract}
Given any dimension function $h$, we construct a perfect set $E \subseteq \R$  of zero $h$-Hausdorff measure, that contains any finite polynomial pattern.

This is achieved as a special case of a more general construction in which we have a  family of functions $\mathcal{F}$ that satisfy certain conditions and we construct a perfect set $E$ in $\R^N$, of $h$-Hausdorff measure  zero, such that for any finite set  $\{ f_1,\ldots,f_n\}\subseteq \mathcal{F}$, $E$ satisfies that $\bigcap_{i=1}^n  f^{-1}_i(E)\neq\emptyset$.

We also obtain an analogous result for the images of functions.
Additionally we prove some related results for countable (not necessarily finite) intersections, obtaining, instead of a perfect set, an  $\mathcal{F}_{\sigma}$ set without isolated points.
\keywords{Polynomial Pattern \and Zero dimensional Sets\and Hausdorff measure}
\subjclass{MSC 28A78, MSC 28A80, MSC28A12 \and MSC 11B25}
\end{abstract}

\maketitle


\tableofcontents

\section{Introduction}
\label{intro}
There is a long history about the study of the relations between the size (in some appropriate sense) of a set and the existence of patterns or prescribed configurations contained in it. In the discrete case, the famous Roth-Szemer\'{e}di theorem \cite{Szemeredi75}, states that any subset of the integers of positive density  contains arithmetic progressions of any length. One can view arithmetic progressions as linear patterns; and there have been many similar results for polynomial patterns, a notable one being the now classical theorem of Furstenberg-S\'ark\"{o}zy \cite{Fur}.

Polynomial patterns with rational coefficients have also been studied in number theory by Bergelson and Leibman \cite{BL96}. They proved that any subset of integers of positive density contains (almost) every polynomial pattern. Later, in \cite{TZ08} and \cite{TZ16} Tao and Ziegler extended the result to the set of prime numbers.


These type of problems have also been studied in the continuous setting. For example, in \cite{BC16} Boshernitzan and Chaika prove  the following dichotomy for Borel subsets $A \subseteq [0, 1]$: If $H$ stands for the set of homeomorphisms $\phi: [0, 1] \rightarrow [0, 1]$,
either there exists a homeomorphism $\phi \in H$ such that the image $\phi(A)$ contains no 3-term arithmetic progressions;
or, for every $\phi \in H$, the image $\phi(A)$ contains arithmetic progressions of arbitrary finite length.
In fact, they showed that the first alternative holds if and only if the set $A$ is a countable union of nowhere dense sets.


A set $A$ of real numbers is called universal in measure if every measurable set of positive measure necessarily contains an affine copy of $A$.
From the Lebesgue density theorem, one can deduce that any set $E \subseteq \R$ of positive Lebesgue measure contains similar copies of every finite set (i.e. all finite sets are universal). Iosevich asked the following related question: if $A \subseteq  \R$ is a finite set and $E \subseteq [0,1]$ is a set of Hausdorff dimension $1$, must $E$ contain a similar copy of $A$?
In \cite{Kel} Keleti answered this question by showing that for any set $A \subseteq \R$ of $3$ elements there exists a $1$-dimensional set that contains no similar copy of $A$. Moreover, he proved that given a countable family of triplets, there exists a $1$-dimensional set that does not contain a similar copy of any of those triplets. Maga \cite{Mag} extended Keleti's constructions to the plane.
Using a different method Falconer \cite{Falconer} had previously proved this for only one triangle.
M\'ath\'e \cite{Mat}, and Fraser and Pramanik \cite{FP16} studied similar problems for non-linear patterns, under certain conditions.
In \cite{AY17} the second author obtained finer versions for linear patterns using dimension functions.

In \cite{LP09} {\L}aba and Pramanik gave sufficient conditions to guarantee arithmetic progressions of length $3$ in large sets. They proved that given $E \subseteq \R$ a closed set of Hausdorff dimension $\alpha$, if $\alpha$ is sufficiently close to $1$ and if $E$ supports a probability measure obeying appropriate dimensionality and Fourier decay conditions, then $E$ contains non-trivial $3$-term arithmetic progressions.
Recently, using an harmonic analysis approach, in \cite{HLP16} sufficient conditions where given by the same authors together with Henriot that guarantee that a subset of $\mathbb{R}^N$ contains certain polynomial patterns.

Also assuming hypotheses on the decay of the Fourier transform, in \cite[Corollary 1.7]{CLP16} Chan, {\L}aba, and Pramanik ensure the existence of equilateral triangles contained in a subset of the plane. For recent progress in dimension $4$ and higher, whithout assuming Fourier bounds, see the work of Iosevich and Liu \cite{IL16}.

In this paper we complement these results by explicitly constructing very small perfect sets that {\em do} contain prescribed patterns (in particular, polynomial patterns).

Davies, Marstrand and Taylor \cite{DMT} already proved a result of this kind:
Given a dimension function $h$ (see Section \ref{section def} for the definition), there exists a closed set $E \subseteq \R$ such that $\mathcal{H}^h(E)=0$ and $\bigcap_{i=1}^n (a_iE + b_i) \neq \emptyset$ for any finite subset of linear (real) functions of non-zero slope.
Further there exists an  $\mathcal{F}_{\sigma}$ set $E \subseteq \R$ such that $\bigcap_{i \in \Lambda} (a_i E + b_i) \neq \emptyset$ (for $a_i \neq 0$) for any countable set $\Lambda$ and $\mathcal{H}^h(E)=0$. The first part of this theorem was extended by Keleti, Nagy and Shmerkin to $\R^N$ and the case when the family of functions are affine functions in \cite{KNS}.

We generalize the results from \cite{DMT} and \cite{KNS} in several directions. We allow much more general classes of functions (including polynomials), and consider both images and preimages in the case of non invertible functions.

We prove that given a dimension function $h$ and a family of continuous functions $\mathcal F$ on $\R^N$ satisfying certain conditions (see Theorem \ref{patronesNdim}), then there exists a perfect set $E \subseteq \R^N$, such that $\mathcal{H}^h(E)=0$ and $\bigcap_{1\leq i \leq n} f^{-1}_i(E) \neq \emptyset$ for each finite subset $\{f_1, \cdots, f_n \}$ of $\mathcal F$. In particular the family of non constant polynomials with real coefficients satisfy the assumptions of the previous theorem (see Theorem \ref{patrones polin 1dim} and its generalization Proposition \ref{patrones polin}).

We also prove an analogous result for intersection of images: given a dimension function $h$, and $\mathcal{F}$ a family of continuous functions on $\R^N$ satisfying certain conditions, then there exists a perfect set $E \subseteq \R^N$ such that $\bigcap_{1\leq i \leq n} f_i(E) \neq \emptyset$ for any finite subset $\{f_1, \cdots, f_n \}$ in $\mathcal{F}$, and $\mathcal{H}^h(E)=0$ (see Theorem \ref{teo Ndim cerr}). We show that in fact the conditions on $\mathcal{F}$ are not hard to be satisfied, since for example the set of bilipschitz functions (as well as many others) satisfy them.

An old conjecture of Erd\H{o}s \cite{Erd74} states that for any infinite set $A \subseteq  \R$ there exists a set $E \subseteq \R$ of positive Lebesgue measure which does not contain any similar copy of $A$ (Erd\H{o}s' conjecture says that there is no infinite universal set).
In \cite{Fal84} Falconer proved that slowly decaying sequences are not counterexamples. It is not known if $\{2^{-n} : \ n \in \N \}$ is universal.

In this article we investigate a problem wich is in some sense the oposite of Erd\H{o}s': we construct very small $\mathcal{F}_{\sigma}$ sets containing countable predetermined patterns. We prove that given a dimension function $h$ and a family of continuous functions $\mathcal F$ defined on a closed set $D \subseteq \R^N$ satisfying certains conditions, there exists an  $\mathcal{F}_{\sigma}$ set without isolated points $E \subseteq \R^N$, with $\mathcal{H}^h(E)=0$ and $\bigcap_{i \in \Lambda} f^{-1}_i(E) \neq \emptyset$ for any $(f_i)_{i \in \Lambda} \subseteq \mathcal{F}$ countable subset (see Theorem \ref{Fsigma patr Ndim}). We also obtain analogous results for images insted of preimages (see Theorem \ref{Fsigma Ndim}).

The construction of each of these sets is quite subtle and requires a delicate construction. In fact, the way of constructing the required set in different situations relies on similar methods of proof. We included all details only for the first theorem and only indicate the construction for the other cases.


\section{Definitions and notations}\label{section def}
In this section we will set the notation and give the definitions needed in this paper.

We will call $h$  a {\em dimension function}, if $h:\R_{\geq 0}\to\R \cup\{+\infty\} $, is non-decreasing and  right continuous such that $h(0)=0$ and $h(t) > 0$ for $t>0$.

The set of all dimension functions is partially ordered, considering the order defined by $$h_2 \prec h_1 \text{ if } \lim_{x \to 0^+} \frac{h_1(x)}{h_2(x)} =0.$$

\begin{definition}
If $h$ is a dimension function, the outer Hausdorff measure associated with $h$ is
$$\mathcal{H}^h(E):=\lim_{\delta\to 0} \inf_{} \left\{ \sum_{i}h(|U_i|) : \{U_i\}_{i} \text{ a } \delta
\text{-covering of } E \right\}.$$
\end{definition}
For any dimension function $h$,  $\mathcal{H}^h$ is a Borel measure.
This definition generalises the outer $\alpha$-dimensional Hausdorff measure, which is the particular case  $h(x):=x^{\alpha}$. In this case the parcial order says that $x^s \prec x^t$ if and only if $s<t$.

\begin{definition}\label{defi-pattern}
Let $E \subseteq \R^N$ and  $\mathcal{F}:=\{f_i:\R^N \to \R^N, i \in \Lambda\}$ a set of functions, we say that $E$ contains the pattern $(f_i)_{i \in \Lambda}$ if there exist $t \in \R^N$ such that  $f_i(t) \in E$  $\forall i \in \Lambda$, or equivalently, if
 \begin{equation} \label{eq-intersect} \bigcap_{i \in \Lambda} f^{-1}_i (E) \neq \emptyset.
 \end{equation}
 In the case that
$\Lambda$ is finite, we say that the pattern is finite. If $\Lambda$ es countable, we say that the pattern is countable. If the functions $f_i$ are non-constant polynomials, we say that the pattern is a {\em polynomial pattern}.
\end{definition}

\begin{remark} Our definition of polynomial patterns is more general than all the different definitions of polynomial patterns found in the literature.

Also, it includes arithmetic progressions since they are a special case of polynomial patterns for the case that $f_1 , \cdots, f_n$ are certain similarities.
\end{remark}

Equation \eqref{eq-intersect} is our motivation to study the intersections of the preimages or images of a set under the actions of the functions $f_i$.



We concentrate on the case of closed sets (or $\mathcal{F}_{\sigma}$ sets), since it is not difficult to find $\mathcal{G}_{\delta}$ sets such that their intersection under countable homeomorphisms is non-empty.

Through the whole paper, bilipschitz and locally bilipschitz functions will play a fundamental role. For convenience of the reader we give the definition here:

\begin{definition} \label{defi-bilip}
A function $f: \R^N \to \R^N$ is {\em bilipschitz} if there exist positive constants  $c_1$ and $c_2$ such that for $x,y \in \R^N$ we have
\begin{equation*}c_1\|x-y\| \leq \| f(x)-f(y) \| \leq c_2 \|x-y \|.\end{equation*}
In the particular case that $c_2 \leq 1$, we say that $f$ is a {\em non-expansive} bilipschitz map. And in the case that
$c_1\geq 1$ we say that it is  a {\em non-contractive} bilipschitz map.
Sometimes, when we want to make explicit reference to the constants we will say that the function is {\em bilipschitz with constants ($c_1, c_2$)}.
\end{definition}

\begin{definition} If $X$ and $Y$ are topological spaces, we say that a function $f: X \to Y$ is a {\em closed map}, if for all closed sets $E$ in $X$, $f(E)$ is closed in $Y$.
\end{definition}

\begin{definition}
A function $\psi: \Omega_{\psi} \subseteq \R^N \to \R^N$ is {\em locally bilipschitz} if for any $x_0 \in \Omega_{\psi}$ there exist $\varepsilon=\varepsilon(x_0)$ and constants  $c_1=c_1(x_0)>0$ and $c_2=c_2(x_0)>0$ such that for $x,y \in B(x_0,\varepsilon)$ we have
\begin{equation*}c_1 \|x-y\| \leq \|\psi(x)-\psi(y)\| \leq c_2 \|x-y\|.\end{equation*}
\end{definition}

Note first that being {\em locally bilipschitz} is indeed a weaker property than being {\em bilipschitz}, since we have the following Observation.

\begin{observation}\label{lemita}
If $\psi: \Omega \to \R^N$ with $\Omega$ a closed subset of $\R^N$ is locally bilipschitz, closed and injective, then:
\begin{enumerate}
\item If $K$ is a compact subset of $\Omega$, $\psi |_K$ is bilipschitz.
\item If $A$ is a compact set in $\R^N$, $K$ is a compact set in $\Omega$ contained in $\psi^{-1}(A)$, then there exists a positive constant $c=c(\psi, A)$ such that
\begin{equation*}\diam( \psi(K)) \geq \frac{ \diam(K)}{c}.\end{equation*}
\end{enumerate}
\end{observation}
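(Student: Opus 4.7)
The plan is to first establish (1) by sequential-compactness arguments directly from the local bilipschitz hypothesis, and then to deduce (2) by showing that $\psi^{-1}(A)$ is itself compact, so that (1) may be applied to it. For (1) I would argue both Lipschitz bounds by contradiction. For the upper bound, suppose sequences $x_n, y_n \in K$ with $x_n \neq y_n$ make $\|\psi(x_n) - \psi(y_n)\|/\|x_n - y_n\|$ blow up; by compactness of $K$ extract convergent subsequences $x_n \to x$ and $y_n \to y$ in $K$. If $x = y$, the ratio is eventually at most $c_2(x)$ because both points sit in the bilipschitz ball $B(x, \varepsilon(x))$; if $x \neq y$, continuity of $\psi$ (implied by local bilipschitzness) forces the ratio to converge to the finite limit $\|\psi(x) - \psi(y)\|/\|x-y\|$. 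Either case gives a contradiction. The lower bound is symmetric: the case $x = y$ is ruled out by local bilipschitzness (ratio bounded below by $c_1(x)$), while the case $x \neq y$ is ruled out using injectivity, which forces $\|\psi(x) - \psi(y)\| > 0$, so the ratio stays bounded away from $0$ since $\|x_n - y_n\| \leq \diam(K) < \infty$.

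For (2), once I know $K_0 := \psi^{-1}(A)$ is compact, applying (1) to $K_0$ yields constants $c_1 = c_1(\psi, A)$ and $c_2 = c_2(\psi, A)$ satisfying $c_1\|x-y\| \leq \|\psi(x) - \psi(y)\| \leq c_2 \|x-y\|$ on $K_0$. Then for any compact $K \subseteq K_0$, picking $x, y \in K$ that attain $\|x-y\| = \diam(K)$ (possible by compactness) gives
\[
\diam(\psi(K)) \geq \|\psi(x) - \psi(y)\| \geq c_1 \diam(K),
\]
so the constant $c := 1/c_1$ works.

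The main obstacle is proving that $\psi^{-1}(A)$ is compact, and this is precisely where both the closed-map and the injectivity hypotheses come into play. Closedness of $\psi^{-1}(A)$ is immediate from continuity, so only boundedness is at stake. I would assume for contradiction that there exist distinct $x_n \in \psi^{-1}(A)$ with $\|x_n\| \to \infty$, and by compactness of $A$ pass to a subsequence with $\psi(x_n) \to a \in A$. The set $F := \{x_n : n \in \N\}$ has no accumulation points in $\R^N$, hence is closed in $\R^N$ and contained in $\Omega$, so by the closed-map hypothesis $\psi(F) = \{\psi(x_n)\}$ is closed; therefore the accumulation point $a$ must lie in $\psi(F)$, giving $a = \psi(x_{n_0})$ for some $n_0$. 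But then $F' := F \setminus \{x_{n_0}\}$ is still closed, and by injectivity $\psi(F') = \psi(F) \setminus \{a\}$ still accumulates at $a$ without containing it, contradicting the closed-map hypothesis. Hence $\psi^{-1}(A)$ is bounded, hence compact, which completes the reduction.
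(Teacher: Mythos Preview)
Your argument is correct and is exactly the approach the paper has in mind: the paper records this result as an observation, giving no proof beyond the remark that it ``uses the compactness of $K$ and the fact that $\psi$ is closed and injective.'' Your sequential-compactness argument for (1), using injectivity to handle the lower bound when the limit points differ, and your reduction of (2) to (1) by proving $\psi^{-1}(A)$ compact via the closed-map and injectivity hypotheses, fill in the details precisely along these lines.
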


The proof uses the compactness of $K$ and the fact that $\psi$ is closed and injective (In fact, neither of these conditions can be removed).


\section{Finite intersections and finite patterns} \label{finite}

In this section we will prove (constructively) the existence of small sets  that have non-empty {\em finite} intersection through some maps or preimages of maps.

\subsection{Small perfect sets with a finite intersection property}
\

\

We will prove that given a dimension function $h$ and a set $\mathcal{F}$ of functions satisfying certain conditions there exists a perfect set $E$  of zero $\mathcal{H}^h$ measure  such that any finite intersection of the images of $E$ by functions of $\mathcal{F}$ is non-empty.

Our tool will be to modify our original functions by composing them with functions $\Psi$ such that the compositions are bilipschitz. This will enable us to look at the intersections of the images of $E$ essentially as an intersection of the images under bilipschitz maps.

We will need the following lemma to construct the set.

\begin{lemma}\label{lemita2}
Let $h$ be a dimension function and let $L, N \in \N$ be given. There exists a sequence
$(\delta_n)_{n\in \N_0}  \subseteq (0,1]$ satisfying simultaneously:
\begin{enumerate}
\item $\delta_0=1$,
\item $\delta_{n}\leq  \frac{\delta_{n-1}}{4L \sqrt{N}}$,
\item $\lim_{n \to +\infty} \left( \frac{N_1}{\delta_{n-1}}+1 \right)^N h(\delta_n N_2)=0$ for all $N_1, N_2 \in \N$.
\end{enumerate}
\end{lemma}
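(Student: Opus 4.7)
The plan is to build the sequence $(\delta_n)$ inductively via a diagonal argument. The challenge is that condition (3) is a family of infinitely many limit requirements (one for each pair $(N_1,N_2)\in\N^2$), all of which must be enforced simultaneously using a single sequence $(\delta_n)$. The key enabling fact is that at stage $n$ the quantity $\delta_{n-1}$ has already been fixed, so the factor $\bigl(\frac{N_1}{\delta_{n-1}}+1\bigr)^N$ is a constant with respect to the choice of $\delta_n$; by right continuity of $h$ at $0$ together with $h(0)=0$, we have $\lim_{t\to 0^+} h(t)=0$, so the second factor $h(\delta_n N_2)$ can be driven arbitrarily close to $0$ by shrinking $\delta_n$.

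First I would set $\delta_0:=1$. Then, assuming $\delta_0,\ldots,\delta_{n-1}$ have been chosen, I would pick $\delta_n\in(0,1]$ so that simultaneously
\[
\delta_n \leq \frac{\delta_{n-1}}{4L\sqrt{N}} \qquad \text{and} \qquad \left(\frac{N_1}{\delta_{n-1}}+1\right)^{\!N} h(\delta_n N_2) \leq \frac{1}{n} \quad \text{for every } N_1,N_2\in\{1,\ldots,n\}.
\]
Since the second clause is a finite collection of constraints of the form $h(\delta_n N_2)\leq C(n,N_1,N_2,\delta_{n-1})$ with strictly positive right-hand sides, and since $\lim_{t\to 0^+}h(t)=0$, both clauses can be satisfied by taking $\delta_n$ sufficiently small.

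Conditions (1) and (2) are then built into the construction. To verify (3), fix arbitrary $N_1,N_2\in\N$ and let $n_0:=\max\{N_1,N_2\}$. For every $n\geq n_0$ the pair $(N_1,N_2)$ lies in $\{1,\ldots,n\}^2$, so the inductive choice forces
\[
0 \leq \left(\frac{N_1}{\delta_{n-1}}+1\right)^{\!N} h(\delta_n N_2) \leq \frac{1}{n},
\]
and the right-hand side tends to $0$ as $n\to\infty$, yielding (3).

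The only place anything subtle happens is the diagonalization: one must resist the temptation to pick $\delta_n$ to handle only the single pair $(N_1,N_2)$ one currently cares about, and instead handle all pairs with indices up to $n$ at stage $n$. Once this is organized, the proof is a short inductive selection, and the main input from the hypothesis on $h$ (right continuity at $0$ and $h(0)=0$) is what ensures the finite system of constraints at each stage is solvable.
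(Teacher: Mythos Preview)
Your proof is correct and follows essentially the same diagonal/inductive approach as the paper. The only cosmetic difference is that the paper fixes an arbitrary enumeration $(q_i)_{i\in\N}$ of $\N\times\N$ and at stage $n$ enforces the constraint for $q_1,\ldots,q_n$, whereas you enforce it for all pairs in $\{1,\ldots,n\}^2$; both organize the same diagonalization, and your explicit appeal to right continuity of $h$ at $0$ is exactly the point that makes each inductive step solvable.
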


\begin{proof}
Let $(q_i)_{i \in \N}$ be an enumeration of $\N \times \N$.
We choose first $\delta_0=1$. Once we have chosen $\delta_0, \cdots, \delta_{n-1}$, we take $\delta_n >0$ sufficiently small such that $\delta_n \leq  \frac{\delta_{n-1}}{4L \sqrt{N}}$ and
\begin{equation*}
\left( \frac{\pi_1(q_i)}{\delta_{n-1}}+1 \right)^N h(\delta_n \pi_2(q_i)) <\frac{1}{n} \text { for all } 1 \leq i \leq n,\end{equation*}
 where $\pi_1$ and $\pi_2$ are the projections onto the first and second coordinate respectively.
 Let us see that the constructed sequence satisfies the third condition.
 For, given $N_1 , N_2 \in \N$, there exists $i \in \N$ such that $q_i = (N_1, N_2)$.
 Given $\varepsilon > 0$, if $n > \max\{ i, \frac{1}{\varepsilon} \}$, we have that
 $$\left( \frac{\pi_1(q_i)}{\delta_{n-1}}  +1 \right)^N h(\delta_n \pi_2(q_i))< \varepsilon, $$
 which implies the third statement.
 \end{proof}

\begin{theorem}\label{teo Ndim cerr}
Let $h$ be a dimension function, $\mathcal{F}$ a family of continous functions from $\R^N$ to $\R^N$ such that there exists a countable family of closed, injective, locally bilipschitz functions $\Psi=\{ \psi_r: \Omega_r \to \R^N\}_{r\in \N}$  each defined on a closed set $\Omega_r$ and satisfying:
\begin{itemize}
\item
$\lim_{\| x \| \to +\infty, x \in \Omega_r} \|\psi_r(x)\|=+\infty.$
 \item for each $f_i \in \mathcal{F}$ there exists $\psi_{r(i)} \in \Psi$ such that $f_i \circ \psi_{r(i)}$ is a non-contractive bilipschitz map on $\Omega_{r(i)}$
 \item given any finite number of functions $f_1, \cdots , f_n \in \mathcal{F}$, the set $A_{a, f_1, \cdots, f_n}$ defined as
 \begin{equation*} A_{a, f_1, \cdots, f_n}:=\bigcap_{1 \leq i \leq n} f_i \circ \psi_{r(i)}|_{\Omega_{r(i)}} \left( \left( (-a,a)^N \right)^C \right)
 \end{equation*}
 contains arbitrarily large balls for all $a>0$.
\end{itemize}
Then there exists a perfect set $E \subseteq \R^N$, such that $\mathcal{H}^h(E)=0$ and $\bigcap_{1\leq i \leq n} f_i(E) \neq \emptyset$ for any finite subset $\{f_1, \cdots, f_n \} \subseteq \mathcal{F}$.
\end{theorem}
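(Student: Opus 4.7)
The plan is to construct $E$ as a Cantor-like set $E = \bigcap_{n \geq 0} E_n$, where each $E_n$ is a finite union of closed cubes of side $\delta_n$ with $(\delta_n)$ provided by Lemma~\ref{lemita2}. The splitting rule places at least two cubes of $E_n$ inside each cube of $E_{n-1}$, so $E$ is perfect, and controlling the count of cubes via condition~(3) of Lemma~\ref{lemita2} immediately gives $\mathcal{H}^h(E)=0$ upon using the $\delta_n$-cubes as a covering. The constant $L$ in the lemma is chosen to dominate the bilipschitz upper constants $c_2$ of the compositions $g_i := f_i \circ \psi_{r(i)}$ on the compact pieces of $\Omega_{r(i)}$ arising in the construction; these restrictions are genuinely bilipschitz by Observation~\ref{lemita}(1) (if a uniform $L$ is not available in advance, $\delta_n$ may be taken adaptively at each stage with the same recursion as in the proof of Lemma~\ref{lemita2}).

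To encode the intersection property, I enumerate the countable set of finite tuples of $\Psi$, with infinite repetition, as $(T_n)_n$. At stage $n$, with $T_n = (\psi_{r_1}, \ldots, \psi_{r_{k_n}})$, I fix representatives $f_i^{(n)} \in \mathcal{F}$ having $r(f_i^{(n)}) = r_i$. By hypothesis~(3), $A_{a_n, f_1^{(n)}, \ldots, f_{k_n}^{(n)}}$ contains a ball $B_n$ of arbitrarily large radius; using hypothesis~(1) together with Observation~\ref{lemita}(2), I choose $a_n$ so large that, for any $y_n \in B_n$, the unique preimages $t_i^{(n)} := g_i^{-1}(y_n) \in \Omega_{r_i}\setminus(-a_n,a_n)^N$ (well-defined by the non-contractive bilipschitz and injective properties) lie far from every previously placed cube. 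I then add a $\delta_n$-cube around each $\psi_{r_i}(t_i^{(n)})$ to $E_n$, along with bifurcation cubes inside every surviving cube of $E_{n-1}$. The choices of the $y_n$ are coordinated across stages so that, for each $r \in \N$ and each $M>0$, the points $\psi_r(t_i^{(n)})$ contributed at stages whose tuple contains $\psi_r$ are dense in $\psi_r(\Omega_r\setminus(-M,M)^N)$.

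For any given finite $\{f_1,\ldots,f_k\} \subseteq \mathcal{F}$, I apply hypothesis~(3) directly to this specific tuple to obtain a large closed ball $B \subseteq A_{a,f_1,\ldots,f_k}$ and let $g_i := f_i\circ\psi_{r(f_i)}$. Let $V_n$ be the open $\delta_n$-neighborhood of $E_n$, so $V_n \supseteq E_n$ and $\bigcap_n V_n = E$. Define
\begin{equation*}
U_i(n) := \bigl\{ y \in B : g_i^{-1}(y) \in \psi_{r(f_i)}^{-1}(V_n)\cap\Omega_{r(f_i)} \bigr\}.
\end{equation*}
Each $U_i(n)$ is open in $B$ by continuity of $g_i^{-1}$ (bilipschitz on its image) and dense in $B$ by the density statement in paragraph two, pulled back through the bilipschitz $g_i^{-1}$. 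The Baire category theorem applied to the complete metric space $B$ then yields $y^* \in \bigcap_{i,n} U_i(n)$; such $y^*$ satisfies $g_i^{-1}(y^*)\in\psi_{r(f_i)}^{-1}(E)\cap\Omega_{r(f_i)}$ for every $i$, so $\psi_{r(f_i)}(g_i^{-1}(y^*))\in E$ and $f_i\bigl(\psi_{r(f_i)}(g_i^{-1}(y^*))\bigr) = y^*$, giving $y^* \in \bigcap_i f_i(E)$.

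The main obstacle will be realising the density coordination promised in paragraph two: the $y_n$ at stage $n$ is constrained to be a common target for all coordinates of $T_n$, and arranging that over the stages containing any fixed $\psi_r$ the induced points $\psi_r(t^{(n)})$ become dense in $\psi_r(\Omega_r\setminus(-M,M)^N)$ for every $M$ requires a careful diagonalisation over a countable basis of each $\Omega_r$, performed compatibly with the shrinking scales $\delta_n$ and with the requirement of not colliding with previously placed cubes. This is what uses the full strength of hypothesis~(3) (the ball radius in $A_{a_n,\cdot}$ can be taken arbitrarily large, giving room to hit any prescribed basis element) and hypothesis~(1) (so that large $\|t_i^{(n)}\|$ translates to large $\|\psi_{r_i}(t_i^{(n)})\|$, keeping successive cubes disjoint). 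Once this density is in place, the Baire step and the routine verifications of perfectness and $\mathcal{H}^h(E)=0$ complete the argument; the density bookkeeping is where the real technical work concentrates.
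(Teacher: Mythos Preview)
Your construction has a structural flaw. You write $E=\bigcap_{n}E_n$ with each $E_n$ a \emph{finite} union of closed cubes, which forces $E$ to be compact; but no bounded set can satisfy the intersection property once $\mathcal{F}$ contains, say, all translations (for $x_0\notin E-E$ one has $E\cap(E+x_0)=\emptyset$). You try to compensate by placing new seed cubes at stage $n$ ``far from every previously placed cube'', yet that contradicts $E_n\subseteq E_{n-1}$: a cube introduced at stage $n$ outside $E_{n-1}$ is simply removed by the very intersection defining $E$. The Baire step inherits the same defect: with $E_n$ finite, $V_n$ is bounded, hence so is $U_i(n)\subseteq g_i\bigl(\psi_{r(f_i)}^{-1}(V_n)\cap\Omega_{r(f_i)}\bigr)$, and a bounded set cannot be dense in the large ball $B$. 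Likewise, forcing every $t_i^{(n)}$ outside $(-a_n,a_n)^N$ with $a_n$ chosen so that $\psi_{r_i}(t_i^{(n)})$ avoids all previous cubes sends the seed points to infinity, so only finitely many of them lie in any bounded open subset of $\psi_r(\Omega_r\setminus(-M,M)^N)$; the promised ``density coordination'' is thus incompatible with your own disjointness requirement.

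The paper's argument avoids all of this by making the basic pieces unbounded from the outset. It fixes \emph{periodic} sets $F_n\subset\R^N$ (an $N$-fold product of equally spaced closed intervals of length $\delta_n$), forms $K_j=\bigcap_k F_{(2k-1)2^{j-1}}$, and takes $E_L$ to be a countable \emph{union} of push-forwards $\psi_j(K_j\cap I_m\cap\Omega_j)$, finally $E=\bigcup_{L\ge 2}E_L$. The finite-intersection property is then obtained by a direct nested-compacts argument carried out in the \emph{target} space: one produces cubes $C_k$ of $F_{m_k}$ with $g_{m_{k+1}}(C_{k+1})\subseteq g_{m_k}(C_k)$, using only that the $g_m$ are non-contractive bilipschitz and the spacing of $(\delta_n)$ from Lemma~\ref{lemita2}. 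No Baire category, no density bookkeeping, and no choice of representatives $f_i^{(n)}$ is needed; the periodicity of the $F_n$ does all the work.
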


\begin{proof}

Let $\mathcal{F}=\bigcup_{L \in \N_{\geq 2}} \mathcal{F}_L$, where
\begin{align*}
\mathcal{F}_L:=\{ f\in \mathcal{F}: \ &\exists\ \psi \in \Psi \text{ satisfying all hypothesis} \\
&\text{ such that } f \circ \psi \text{ is bilipschitz with constants  $(1,L)$} \}.
\end{align*}

We will prove that given $L\geq 2$, there exists a closed set
$E_L \subseteq B(0,L)^C$, such that $\mathcal{H}^h(E_L)=0$ and $\bigcap_{1\leq i \leq n}f_i(E_L) \neq \emptyset$ for any finite subset $\{ f_1, \cdots, f_n \}$ of $\mathcal{F}_L$.

To see this, let us fix $L\geq 2$ and let $(\delta_n)_{n\in \N}$ be the sequence given by Lemma~\ref{lemita2}.

Define $\tilde{F}_n \subseteq \R$ as a union of closed intervals of length $\delta_n$  on the positive real line that are equally spaced  and the complementary intervals are of length $\frac{\delta_{n-1}}{L4\sqrt{N}}$. Let $F_n:=\tilde{F}_n^N$ and let $K_j$ be defined as
\begin{equation*}
K_j:=\bigcap_{k \in \N} F_{(2k-1)2^{j-1}}.
\end{equation*}
By the choice of the sequence $(\delta_n)_{n \in \N}$ and the argument below, it follows that $K_j$ is non-empty.

We now fix $m \in \N$. Since by hypothesis $\lim_{\|x\| \to +\infty, x\in \Omega_j}\|\psi_j (x)\|  = +\infty$, for each $j \in \{1, \cdots , m \}$, there exists $k_j \in \N$ such that $\|\psi_j (x)\| \geq \max\{m,L \}$ for all $x \in \left( (-k_j,k_j)^N \right)^C \cap \Omega_j$. We define
\begin{equation} \label{eq-interval}
I_m:=\left( (-\tilde{k}_m,\tilde{k}_m)^N \right)^C \quad \text{with} \quad \tilde{k}_m:=\max_{1\leq j \leq m} k_j,
\end{equation}
and
\begin{equation*}E_L:= \bigcup_{m \in \N} \bigcup_{j=1}^m \psi_j(K_j \cap I_m \cap \Omega_j).
\end{equation*}
Since $\psi_j(K_j \cap I_m \cap \Omega_j) \subseteq \left( B(0,L) \right)^C$ for all $m\in\N$ and all $1 \leq j \leq m$, we have that $E_L \subseteq \left( B(0,L) \right)^C$.

Further, $E_L$ is closed, because $\bigcup_{j=1}^m \psi_j(K_j \cap I_m \cap \Omega_j)$ is contained in $\left( B(0,m) \right)^C$ and is closed since $\psi_j$ is a closed function.

Our sought after {\em closed} set $E$ is
\begin{equation} \label{conjunto}
E:=\bigcup_{L \in \N_{\geq 2}}E_L.
\end{equation}

To be able to proof that $E$ satisfies the desired conditions, we first need to prove the following claims:
\begin{enumerate}
\item {\bf Claim 1.} Given $f_1 , \cdots, f_n \in \mathcal{F}_L$, then $\bigcap_{i=1}^n f_i (E_L) \neq \emptyset$.
\item {\bf Claim 2.} $\mathcal{H}^h(E_L)=0$.
\end{enumerate}

For the proof of Claim 1,
By the hypothesis of the family of functions, to each function
$f_i$ we  associate a function $\psi_{r(i)}$ such that $f_i\circ \psi_{r(i)}$ is bilipschitz with constants $(1,L)$ in $\Omega_{r(i)}$.

Let $R:=\max\{ r(1), \cdots ,r(n) \} \in \N$, and let $I_R$ be given by equation \eqref{eq-interval}.

By hypothesis $A :=\bigcap_{1\leq i \leq n} f_i \circ \psi_{r(i)}|_{\Omega_{r(i)}} (I_R)$ contains arbitrarily large balls.

Hence, using the injectivity of  $f_i\circ \psi_{r(i)}$ we have
\begin{align}\label{cuentita-1}
\bigcap_{i=1}^n f_i(E_L)
&\supseteq \bigcap_{i=1}^n f_i \left( \psi_{r(i)}|_{\Omega_{r(i)}}(K_{r(i)} \cap I_R) \right) \nonumber\\
&\supseteq \left( \bigcap_{i=1}^n f_i \circ \psi_{r(i)}|_{\Omega_{r(i)}} (K_{r(i)}) \right) \bigcap \left( \bigcap_{i=1}^n f_i \circ \psi_{r(i)}|_{\Omega_{r(i)}}(I_R) \right) \nonumber\\
&\supseteq \bigcap_{i=1}^n \bigcap_{k \in \N} f_i \circ \psi_{r(i)}|_{\Omega_{r(i)}}(F_{(2k-1)2^{r(i)-1}}) \cap A.
\end{align}
For each $r(i) \in \{r(1), \dots, r(n)\}$ consider all  those $m \in \N$ that are of the form $(2k-1)2^{r(i)-1}$ with $k \in \N$ and define $g_m:=f_i \circ \psi_{r(i)}|_{\Omega_{r(i)}}$. Let $\Lambda$ be the countable set of those indices which we consider strictly ordered ($m_n< m_{n+1} \  \forall  n \in \N, m_n \in \Lambda$). Note that there may be many $g_m$ that are equal.
Therefore we rewrite equation \eqref{cuentita-1} as
\begin{equation} \label{cuentita}
\bigcap_{i=1}^n f_i(E_L) \supseteq \bigcap_{m\in \Lambda} g_m(F_m) \cap A.
\end{equation}

In order to show that this last intersection  is not empty, we will show that:
\begin{enumerate}
\item There exists  a cube $C_1$ of $F_{m_1}$ such that $C_1 \subseteq g^{-1}_{m_1}(A)$.
\item Given a cube $C_k$ of  $F_{m_k}$ included in $g^{-1}_{m_k} (A)$, there exists a cube $C_{k+1}$ of $F_{m_{k+1}}$ such that $g_{m_{k+1}}(C_{k+1})\subseteq g_{m_k} (C_k)$.
\end{enumerate}
These two conditions yield a sequence of nested compact sets and hence their intersection is not empty.
We show both conditions:
\begin{enumerate}
\item By hypothesis for each $\rho>0$ there exists a ball $B_{\rho}$ of radius $\rho$ inside $A$ wich is contained in $\text{Im}(g_{m_1})$, and since the cubes of $F_{m_1}$ are distributed uniformly throughout the whole $\R^N$ with the same length and separation there must exist a ball of radius $\frac{\rho}{L}$ inside $g^{-1}_{m_1}(B_{\rho}) \subseteq g^{-1}_{m_1}(A)$.

\item Let $x$ be the center of $C_k \subseteq g^{-1}_{m_k}(A)$. We have the following inclusions $\overline{B \left(x, \frac{\delta_{m_k}}{2} \right)} \subseteq C_k \subseteq g^{-1}_{m_k}(A)$ and hence $g_{m_k} \left( \overline{B \left(x, \frac{\delta_{m_k}}{2} \right)} \right) \subseteq g_{m_k}(C_k)$.
Further we can see that
\begin{equation*}
\overline{B \left(g_{m_k}(x), \frac{\delta_{m_k}}{2} \right)} \subseteq g_{m_k} \left( \overline{B \left(x, \frac{\delta_{m_k}}{2} \right)} \right) .
\end{equation*}
For, let $y \in \overline{B \left(g_{m_k}(x), \frac{\delta_{m_k}}{2} \right)}$. If $y \notin g_{m_k} \left( \overline{B \left(x, \frac{\delta_{m_k}}{2} \right)} \right)$, and  since $g_{m_k}$ is injective and continuous, by the Jordan-Brouwer separation theorem, $g_{m_k} \left(\partial \overline{B(x,\frac{\delta_{m_k}}{2})}   \right)$ is the boundary between the regions $g_{m_k} \left( B(x,\frac{\delta_{m_k}}{2}) \right)$ and $\R^N \setminus g_{m_k} \left( \overline{B(x,\frac{\delta_{m_k}}{2})} \right)$. Clearly $g_{m_k}(x)$ is contained in the first region, and by assumption $y$  belongs to the second region. Hence there must exist
$z \in g_{m_k} \left(\partial \overline{B(x,\frac{\delta_{m_k}}{2})}   \right)$ such that $\dist (z, g_{m_k}(x))< \dist (y, g_{m_k}(x))$.

Since $g_{m_k}$ is non-contractively bilipshitz, we have
\begin{equation*}
\frac{\delta_{m_k}}{2} = \dist (g^{-1}_{m_k}(z), x) \leq \dist (z, g_{m_k}(x))< \dist (y, g_{m_k}(x)) \leq \frac{\delta_{m_k}}{2},
\end{equation*}
which is a contradiction.

Combining all inequalities, we have
\begin{equation}\label{inclusion}
\overline{B \left(g_{m_k}(x), \frac{\delta_{m_k}}{2} \right)} \subseteq g_{m_k} \left( \overline{B \left(x, \frac{\delta_{m_k}}{2} \right)} \right) \subseteq g_{m_k}(C_k).
\end{equation}
From here we conclude
\begin{align}\label{conten}
\overline{B}:=\overline{B \left( g^{-1}_{m_{k+1}}(g_{m_k}(x)), \frac{\delta_{m_k}}{2L} \right)} &\subseteq g^{-1}_{m_{k+1}} \left( \overline{B \left( g_{m_k}(x), \frac{\delta_{m_k}}{2L} \right)} \right) \\
&\subseteq g^{-1}_{m_{k+1}}(g_{m_k}(C_k)). \nonumber
\end{align}
where the first inclusion is proven analogously as for equation \eqref{inclusion}.

To see that in fact there is a cube from $F_{m_{k+1}}$ in $\overline{B}$, it is enough to show that
\begin{equation*}
\sqrt{N} \left( \frac{\delta_{m_{k+1}-1}}{4L\sqrt{N}} + \delta_{m_{k+1}} \right) \leq \frac{\delta_{m_k}}{2L}.
\end{equation*}
Since $m_{k+1}>m_k$, we have $\delta_{m_{k+1}-1} \leq \delta_{m_k}$ and $\delta_{m_{k+1}} \leq \frac{\delta_{m_k}}{4L\sqrt{N}}$ by the way we chose the sequence using Lemma~\ref{lemita2}. This proves the desired inequality, and hence Claim 1.
\end{enumerate}

We now turn our attention to {\bf Claim 2}, i.e. to see that  $\mathcal{H}^h(E_L)=0$.

Since $E_L\subseteq \bigcup_{j \in \N} \bigcup_{i \in \N} \psi_j(K_i)$, it suffices to show that
\begin{equation*} \mathcal{H}^h\left(\bigcup_{j \in \N} \bigcup_{i \in \N} \psi_j(K_i)\right)=0.
\end{equation*}
Let $\tilde{I}$ a cube of sidelength $1$, and let  $i, j \in \N$. It will be enough to see that $\mathcal{H}^h(\psi_j(K_i)\cap\tilde{I})=0$.
But since $\psi_j(K_i)\cap\tilde{I} \subseteq \bigcap_{k \in \N}\psi_j(F_{(2k-1)2^{i-1}})\cap\tilde{I}$, let us show that $\mathcal{H}^h_{\delta_n N_2}(\psi_j(F_n)\cap\tilde{I})\longrightarrow_{n \to \infty} 0$ where $N_2:= N_2(\psi_j ,N, \tilde{I}) \in \N$.

Let
\begin{equation*}M:=\#\{ J \text{ cube in } F_n \text{ such that } \psi_j(J)\cap \tilde{I} \neq \emptyset \}.\end{equation*}

Since the diameter of the cubes in $F_n$ is $\sqrt{N} \delta_n \leq \sqrt{N}$, if $J$ is a cube in $F_n$ such that
$J \cap \psi_j^{-1}(\tilde{I}) \neq \emptyset$ we have that $J$ is contained in a compact set $G$:
\begin{equation*}
J \subseteq G:=G(\psi_j, \tilde{I}, N):=\{x: \ \dist(x,\psi_j^{-1}(\tilde{I}))\leq \sqrt{N} \}.
\end{equation*}
Since $\psi_j$ is an injective and closed locally bilipschitz function defined on the closed set $\Omega_j$, when restricted to the compact set $G$, by Lemma~\ref{lemita} we have that $\psi_j$ is bilipschitz with constants say $(a,b)$, where $a$ and $b$ only depend on $N$, $\tilde{I}$ and $\psi_j$. Hence, if $J$ is a cube in $F_n$ such that
$J \cap \psi_j^{-1}(\tilde{I}) \neq \emptyset$, then
\begin{equation*}\diam(\psi_j(J)) \leq b \ \diam(J) \leq b\sqrt{N} \delta_n \leq N_2 \delta_n,\end{equation*}
where $N_2:= N_2(\tilde{I}, \psi_j, N) \in \N$ and so
\begin{equation}\mathcal{H}^h_{\lambda \delta_n} (\psi_j(F_n) \cap \tilde{I}) \leq M h(N_2 \delta_n). \label{hausdorff}
\end{equation}

Let now $Q=Q(\tilde{I},N,\psi_j)$ be a cube with sides parallel to the axes and edges of length $\ell:=\ell(\tilde{I},N, \psi_j)$, that contains $G$. We have
\begin{equation*}
M \leq \#\{ J \text{ cube of } F_n : \ J \subseteq Q \}
\leq  \left\lceil \frac{\ell}{\delta_n + \frac{\delta_{n-1}}{8L\sqrt{N}}} \right\rceil^N
\leq \left( \frac{\ell 8L\sqrt{N}}{\delta_{n-1}} +1 \right)^N,
\end{equation*}
\begin{equation*}
\text{and therefore}\quad M
\leq \left( \frac{N_1}{\delta_{n-1}} +1 \right)^N,
\end{equation*}
where $N_1=N_1(\tilde{I},N,\psi_j) := \lceil \ell 8L\sqrt{N} \rceil \in \N$.
Inserting this into equation \eqref{hausdorff} we have
\begin{equation*}
\mathcal{H}^h_{\lambda \delta_n} (\psi_j(F_n) \cap \tilde{I}) \leq \left( \frac{N_1}{\delta_{n-1}} +1 \right)^N h(N_2 \delta_n)
\end{equation*}
wich tends to $0$ if we choose $(\delta_n)_{n \in \N}$ as in Lemma~\ref{lemita2}.
This completes the proof of Claim 2.

We are now ready to prove that the set $E$ given by \eqref{conjunto}, satisfies the expected thesis of the Theorem. Recall that $E$ is given by:
\begin{equation*}
E:=\bigcup_{L \in \N_{\geq 2}}E_L.
\end{equation*}
We check:
\begin{itemize}
\item $E$ is closed, since for each $L$ we have that $E_L$ is closed and contained in $B(0,L)^C$.
\item $\mathcal{H}^h(E)=0$, since $E$ is a countable union of sets of $\mathcal{H}^h$-measure zero.
\item If $f_1, \cdots, f_n$ is any finite set of functions from $\mathcal{F}$, there exist natural numbers $L_1, \cdots, L_n \in \N_{\geq 2}$ such that $f_i \in \mathcal{F}_{L_i}$. Choosing $L:=\max\{L_1,\cdots,L_n\}$ we have that $\{f_1, \cdots, f_n\}\subseteq \mathcal{F}_L$, and therefore
\begin{equation*}
\bigcap_{i=1}^n f_i(E) \supseteq \bigcap_{i=1}^n f_i(E_L) \neq \emptyset.
\end{equation*}
\end{itemize}
By the first item, $E$ is closed, and the following lemma shows that we can modify the construction of $E$ slightly, in order for it to be perfect and still satisfy all the required conditions.
This completes the proof of Theorem~\ref{teo Ndim cerr}.
\end{proof}

The following lemma shows that if our set has isolates points we can replace it by another set without isolated points, and still satisfying the required properties.

\begin{lemma}\label{sin aislados} Given a dimension function $h$ and $\tilde{E} \subseteq \R^N$ a closed or $\mathcal{F}_{\sigma}$ set such that  $\mathcal{H}^h(\tilde{E})=0$. Then there exists a set $E \supseteq \tilde{E}$ of the same type but without isolated points such that $\mathcal{H}^h(E)=0$.\end{lemma}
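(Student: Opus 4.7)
The plan is to enlarge $\tilde E$ by attaching, around each of its isolated points, a small perfect set of zero $\mathcal H^h$-measure, with diameters shrinking fast enough that the global topological type (closed, resp.\ $\mathcal F_\sigma$) is preserved. The decisive obstacle is that one cannot just throw in countably many extra points: by the Cantor--Bendixson theorem any nonempty countable closed subset of $\R^N$ has an isolated point, so the extension must use genuinely \emph{perfect} sets of zero $h$-measure, and these must be localized so that their union with $\tilde E$ stays closed.

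First I treat the closed case. Since $\R^N$ is second countable, the set $I$ of isolated points of $\tilde E$ is at most countable; enumerate $I=\{x_k\}_{k\in\N}$. For each $k$ set
$$r_k := \min\!\left\{2^{-k},\ \tfrac{1}{2}\dist(x_k, \tilde E \setminus \{x_k\})\right\},$$
which is strictly positive because $x_k$ is isolated in $\tilde E$ (with the convention $\dist(x_k,\emptyset)=+\infty$). Inside $\overline{B(x_k,r_k)}$ I construct a perfect set $C_k$ with $x_k\in C_k$ and $\mathcal H^h(C_k)=0$: take a standard iterated Cantor-type construction in a small cube with $2^{Nn}$ subcubes of side $\ell_n$ at step $n$, and choose $\ell_n$ so that $2^{Nn}\,h(\sqrt{N}\,\ell_n)\to 0$; this is possible for any dimension function by right-continuity of $h$ at $0$. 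A translation and scaling makes $x_k$ a vertex, hence an accumulation point, of $C_k$. Define $E := \tilde E \cup \bigcup_k C_k$.

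Now I verify the three required properties. The bound $\mathcal H^h(E)=0$ is immediate from countable subadditivity. For closedness the key is $\diam(C_k)\leq 2r_k\leq 2^{-k+1}\to 0$: given $(y_n)\subseteq E$ with $y_n\to y$, either $(y_n)$ lies eventually in $\tilde E$ or in a single $C_k$ (both closed), or it visits infinitely many distinct $C_{k_n}$, in which case $\|y_n - x_{k_n}\|\to 0$ forces $x_{k_n}\to y$, so $y\in\overline{\tilde E}=\tilde E$. Finally $E$ has no isolated points: non-isolated points of $\tilde E$ remain non-isolated in $E$; each $x_k\in I$ accumulates $C_k\subseteq E$; and any point of $C_k\setminus\{x_k\}$ is non-isolated in $C_k$ by perfectness.

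For the $\mathcal F_\sigma$ case I decompose $\tilde E=\bigcup_n F_n$ with each $F_n$ closed, apply the closed case to each $F_n$ to obtain closed $E_n\supseteq F_n$ without isolated points and with $\mathcal H^h(E_n)=0$, and set $E:=\bigcup_n E_n$. This is $\mathcal F_\sigma$, of zero $h$-measure by subadditivity, and every point of any $E_n$ is already an accumulation point of $E_n\subseteq E$, so $E$ has no isolated points. The only delicate ingredient is the simultaneous control of closedness and of $\mathcal H^h$-measure; the former is handled by forcing $r_k\to 0$, the latter by the standard iterated choice of contraction ratios adapted to $h$.
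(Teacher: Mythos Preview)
Your proof is correct and follows the same idea as the paper's: attach a small perfect set of zero $\mathcal H^h$-measure near each isolated point of $\tilde E$, then take the union. Your version is a bit more careful in the details (you ensure $x_k\in C_k$, force $r_k\to 0$ to get closedness, and reduce the $\mathcal F_\sigma$ case to the closed one), but the approach is essentially identical to the paper's.
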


\begin{proof}
Let $D$ be the set of (at most countable) isolated points of $\tilde{E}$. For each $x \in D$ choose a ball $B(x, r_x)$ sufficiently small as not to touch neither the set nor any other chosen ball, i.e. $B(x,  r_x) \cap \tilde{E} = \emptyset$ and $B(x, r_x) \cap B(y, r_y) =\emptyset$ for all $x \neq y$ in $D$.

Let now $C$ be a compact set without isolated points, contained in $[0,1]^N$ such that $\mathcal{H}^h(C)=0$.

For each $x \in D$ we can put a rescaled and translated copy $C_x \subseteq B(x, r_x)$ of $C$, wich is compact, has no isolated points and $\mathcal{H}^h(C_x)=0$.
We obtain the sought after set $E$ by considering $E:=\tilde{E} \cup \bigcup_{x \in D} C_x$.
\end{proof}

Clearly, not any set of functions will have the desired property, for example if
$\mathcal{F}:= \{f_1(x):=x^2+1 , \ f_2(x):=-x^2  \}$ for $N=1$. The requirement that the set $A_{a, f_1, \cdots, f_n}$ contains arbitrarily large balls is providing the condition that the images of finite functions in $\mathcal{F}$ have to intersect ``a lot'' at ``infinity''.

\begin{remark}
In general we can not aim to obtain a {\em bounded} set that satisfies the finite intersection property. If the family of functions contains for example all affine functions, there exists no bounded set $E$ such that $\bigcap_{1\leq i \leq n} \varphi_i(E)\neq \emptyset$ for any finite affine functions
 $\varphi_1 , \cdots , \varphi_n$. Since we are looking for sets containing polynomial patterns (in particular linear polynomials), it is reasonable to search for closed sets rather than for compact sets.
\end{remark}


We will now exhibit some families of functions to which Theorem~\ref{teo Ndim cerr} can be applied.
\begin{corollary}\label{corol bilip N dim}
Let $\mathcal{F}:=\{ f: \R^N \to \R^N \  bilipschitz \}$ and let $h$ be a dimension function. There exists a perfect set $E \subseteq \R^N$, with $\mathcal{H}^h(E)=0$, such that $\bigcap_{i=1}^n f_i(E) \neq \emptyset$ for any finite subset
$\{ f_1, \cdots, f_n \} \subseteq \mathcal{F}$.
\end{corollary}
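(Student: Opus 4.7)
The plan is to derive this corollary directly from Theorem~\ref{teo Ndim cerr} by exhibiting a suitable countable auxiliary family $\Psi$. The natural choice is $\Psi = \{\psi_r\}_{r \in \N}$ with $\psi_r \colon \R^N \to \R^N$ the dilation $\psi_r(x) = rx$, each defined on $\Omega_r = \R^N$. Each $\psi_r$ is a linear isomorphism of $\R^N$, hence globally (and in particular locally) bilipschitz, closed, and injective, and satisfies $\|\psi_r(x)\| = r\|x\| \to +\infty$ as $\|x\| \to +\infty$.

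For the assignment required by the second hypothesis, if $f \in \mathcal{F}$ is bilipschitz with constants $(c_1, c_2)$, then $f \circ \psi_r$ is bilipschitz with constants $(rc_1, rc_2)$. Setting $r(f) := \lceil 1/c_1 \rceil$ makes the lower constant of $f \circ \psi_{r(f)}$ at least $1$, so this composition is non-contractive bilipschitz on $\Omega_{r(f)} = \R^N$, as required.

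The substantive step is verifying the third hypothesis on $A_{a, f_1, \ldots, f_n}$. Writing $g_i := f_i \circ \psi_{r(i)}$, each $g_i$ is a bilipschitz map $\R^N \to \R^N$. I would first argue that any such map is a bilipschitz bijection of $\R^N$: injectivity is immediate from the lower Lipschitz bound, which also makes $g_i$ proper, hence $g_i(\R^N)$ is closed; invariance of domain makes $g_i(\R^N)$ open; and a nonempty, open, closed subset of $\R^N$ is all of $\R^N$. Granting this, $g_i\bigl(((-a,a)^N)^C\bigr) = \R^N \setminus g_i((-a,a)^N)$ is the complement of a bounded set, and the finite intersection $A_{a,f_1,\ldots,f_n}$ is again the complement of a bounded set, so it contains arbitrarily large balls.

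The only nontrivial point is the invariance-of-domain argument that upgrades a bilipschitz self-map of $\R^N$ to a surjection; everything else is routine verification. Once all three hypotheses of Theorem~\ref{teo Ndim cerr} are in place, the theorem directly yields a perfect set $E \subseteq \R^N$ with $\mathcal{H}^h(E) = 0$ and $\bigcap_{i=1}^n f_i(E) \neq \emptyset$ for every finite subfamily of $\mathcal{F}$, completing the proof.
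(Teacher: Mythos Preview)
Your proof is correct and follows essentially the same approach as the paper: the paper also takes $\Psi$ to be scalar dilations (with $\lambda\in\Q_{>0}$ rather than $\lambda\in\N$, an immaterial difference), associates to each $f$ a dilation making $f\circ\psi$ non-contractive, and then uses invariance of domain to conclude that each $g_i$ is a bijection of $\R^N$, so that $A_{a,f_1,\ldots,f_n}$ is the complement of a bounded set. If anything, your surjectivity argument (proper $\Rightarrow$ closed image, invariance of domain $\Rightarrow$ open image, connectedness $\Rightarrow$ all of $\R^N$) is spelled out more carefully than the paper's one-line appeal to domain invariance.
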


\begin{proof}
We consider the countable family of linear functions
\begin{equation*}\Psi=\{ \psi: \R^N \to \R^N \ \psi (x)=\lambda x  \ : \ \lambda \in \Q_{>0} \},
\end{equation*}
which  satisfy the hypothesis of Theorem~\ref{teo Ndim cerr}.

For each bilipschitz function $f \in \mathcal{F}$ let $(c,d)$ denote the lower and upper bilipschitz constants.  We associate to $f$ any function
$\psi (x)= \lambda x$ with $\lambda \in \Q_{\geq \frac{1}{c}}$. Any choice yields that $f \circ \psi$ is non-contractively bilipschitz in $\R^N$. Further, since $f \circ \psi: \R^N \to \R^N$ is bilipschitz, by the Domain Invariance Theorem, it is bijective and so $f\circ \psi(\R^N)= \R^N$. In particular, for any $a>0$, we have that $$\left( f\circ \psi \left(  \left( (-a,a)^N \right)^C \right) \right)^C = f \circ \psi \left( (-a,a)^N\right).$$

Hence, given any finite set of functions $f_1, \cdots , f_n \in \mathcal{F}$ and $a>0$ we have
\begin{align*}
A_{a, f_1, \cdots, f_n}&=\bigcap_{1 \leq i \leq n} f_i \circ \psi_{r(i)} \left(  \left( (-a,a)^N \right)^C \right)\\
&= \R^N \setminus \bigcup_{1 \leq i \leq n} f_i \circ \psi_{r(i)} \left( (-a,a)^N \right).
\end{align*}
But $\bigcup_{1 \leq i \leq n} f_i \circ \psi_{r(i)} \left( (-a,a)^N \right)$ is bounded and so $A_{a, f_1, \cdots, f_n}$ contains arbitrarily large balls.
\end{proof}

Note that a consequence of this Corollary  (which could also be obtained from \newline \cite{KNS}) is that there exists a closed set $E$ in $\R^N$ of Hausdorff dimension $0$ such that for any finite set $A$ of $\R^N$, there exists $z_A \in \R^N$ such that $A+z_A \subseteq E$; i.e.~{\em $E$ contains all finite sets of $\R^N$ up to translations.

}

To continue exhibiting sets of functions  that satisfy the conditions of our theorem, we use the notation $\R[x]$ for polynomials in $\R$ with real coefficients.

\begin{example}\label{polin coef posit}
For  the particular case that the dimension of the underlying space is $1$, the family $\mathcal{F}:=\left\{ P: \ P \in \R[x] \text{ is non-constant, with positive principal coefficient} \right\}$ satisfies the hypothesis of Theorem \ref{teo Ndim cerr} .
To see this asociate to each polynomial  $P(x)=\sum_{k=0}^n a_k x^k$ of degree $n$, the function $\psi :[1,+\infty) \to \R$, $\psi(x)=\left(\frac{x}{\tilde{a}}\right)^{\frac{1}{n}}+\tilde{d}$ with $\tilde{a} \in \Q \cap (0,a_n]$ and $\tilde{d}$ sufficiently large such that all the coefficients (except possibly the constant term) of $P \circ \psi$ are positive.

Another example is $\mathcal{F}:=\left\{ P: \ P  \in \R[x]  \text{ is an odd degree polynomial} \right\}.$
\end{example}

\subsection{Small perfect sets containing any finite pattern}
\

\

Since the definition of patterns involves the intersection of preimages, rather than images (see equation \eqref{eq-intersect}), in this section we will prove an analogous result for intersections of preimages.

\begin{theorem}\label{patronesNdim}
Let $h$ be a dimension function, $\mathcal F$ a family of continuous functions from $\R^N$ to $\R^N$ such that there exists
a countable family
$\Psi=\{ \psi_j \}_j$ of injective, closed, continuous functions defined on closed sets $\Omega_j \subseteq \R^N$, to $\R^N$, such that $\psi^{-1}_j$ are locally bilipschitz with ${\displaystyle \lim_{\|x\| \to +\infty, x \in \text{Im}(\psi_j)} \|\psi_j^{-1}(x)\|=+\infty}$ for every $j$, satisfying:
\begin{itemize}
 \item for each $f_i \in \mathcal{F}$ there exists $\psi_{r(i)} \in \Psi$ and a closed set $D_i \subseteq \R^N$ such that $\psi_{r(i)} \circ f_i|_{D_i}$ is well defined and non-expansive  bilipschitz.
  \item for each $a>0$ and any choice of finite functions $f_1, \cdots, f_n \in \mathcal{F}$, we have that
   \begin{equation*}A_{a,f_1, \cdots , f_n}:=\bigcap_{1\leq i \leq n}(\psi_{r(i)} \circ f_i|_{D_i})^{-1}(((-a,a)^C)^N)\end{equation*} contains arbitrarily large balls.
\end{itemize}
Then there exists a perfect set $E \subseteq \R^N$, such that $\bigcap_{1\leq i \leq n} f^{-1}_i(E) \neq \emptyset$ for each finite subset of $\{f_1, \cdots, f_n \} \subseteq \mathcal F$ and  $\mathcal{H}^h(E)=0$.
\end{theorem}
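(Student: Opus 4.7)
The plan is to mirror the proof of Theorem~\ref{teo Ndim cerr}, dualising the composition: here the hypothesis gives $\psi_{r(i)}\circ f_i$ \emph{non-expansive} bilipschitz (constants $(1/L,1)$) in place of $f_i\circ\psi_{r(i)}$ being non-contractive $(1,L)$, and the set $E$ is built from \emph{preimages} of the Cantor-like skeleton. First I would stratify $\mathcal F=\bigcup_{L\ge 2}\mathcal F_L$ by the Lipschitz constants, fix $L$, produce $(\delta_n)$ via Lemma~\ref{lemita2}, and reuse the sets $\tilde F_n$, $F_n=\tilde F_n^N$, $K_j=\bigcap_k F_{(2k-1)2^{j-1}}$ verbatim. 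The hypothesis $\lim_{\|x\|\to+\infty,\,x\in\mathrm{Im}(\psi_j)}\|\psi_j^{-1}(x)\|=+\infty$ lets me pick $I_m$, the complement of a large enough cube around the origin, so that $\psi_j^{-1}(K_j\cap I_m)\subseteq B(0,L)^C$ for every $j\leq m$. Setting
\begin{equation*}
E_L:=\bigcup_{m\in\N}\bigcup_{j=1}^m \psi_j^{-1}\bigl(K_j\cap I_m\cap\mathrm{Im}(\psi_j)\bigr),\qquad E:=\bigcup_{L\ge 2}E_L,
\end{equation*}
closedness of each finite-$m$ slice of $E_L$ follows from $\psi_j$ being injective, continuous and closed on the closed set $\Omega_j$ (so $\psi_j^{-1}$ is continuous on the closed set $\mathrm{Im}(\psi_j)$), and the slices escape to infinity, giving that $E_L$ is closed.

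For $\mathcal H^h(E_L)=0$ I would repeat Claim~2 of Theorem~\ref{teo Ndim cerr} almost verbatim with $\psi_j$ replaced by $\psi_j^{-1}$: it suffices to bound $\mathcal H^h(\psi_j^{-1}(K_i)\cap\tilde I)$ for any unit cube $\tilde I$; by Observation~\ref{lemita}(1) the restriction of $\psi_j^{-1}$ to a compact neighbourhood of $\psi_j(\tilde I)$ is genuinely bilipschitz with constants depending only on $\tilde I,\psi_j,N$, whence $\diam(\psi_j^{-1}(J))\leq N_2\delta_n$ for every cube $J$ of $F_n$ meeting $\psi_j(\tilde I)$, and the cube count from property~(3) of Lemma~\ref{lemita2} kills the covering sum.

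For the non-empty intersection, given $f_1,\dots,f_n\in\mathcal F_L$, let $R=\max_i r(i)$, $h_i=\psi_{r(i)}\circ f_i|_{D_i}$ and $A=A_{\tilde k_R,f_1,\dots,f_n}$. After the re-indexing $m=(2k-1)2^{r(i)-1}$ (the families are pairwise disjoint across $r(i)$ by $2$-adic valuation, exactly as in the original proof), one has
\begin{equation*}
\bigcap_{i=1}^n f_i^{-1}(E_L)\supseteq\Bigl(\bigcap_{m\in\Lambda}h_m^{-1}(F_m)\Bigr)\cap A,
\end{equation*}
and I would produce a point in the right-hand set by the dual of the nested-cube argument: starting inside a large Euclidean ball $\overline B_0\subseteq A$ (available since $A$ contains arbitrarily large balls), inductively build closed balls $\overline B_k=\overline B(t_k,\rho_k)\subseteq\overline B_{k-1}$ with the invariant that $h_{m_j}(\overline B_k)$ lies in a single cube $C_j\in F_{m_j}$ for every $j\leq k$; any $t\in\bigcap_k\overline B_k$ then does the job.

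The induction step is where the main obstacle lies. Non-expansiveness of $h_{m_{k+1}}$ gives $h_{m_{k+1}}(\overline B(t_k,\rho_k))\subseteq \overline B(h_{m_{k+1}}(t_k),\rho_k)$, forcing $\rho_k$ to be of order $\delta_{m_k}$ so that the $h_{m_k}$-image fits in one cube of $F_{m_k}$. Conversely, a Jordan--Brouwer argument mirroring~\eqref{inclusion}, now applied to the $L$-Lipschitz $h_{m_{k+1}}^{-1}$, yields $h_{m_{k+1}}(\overline B(t_k,\rho_k))\supseteq \overline B(h_{m_{k+1}}(t_k),\rho_k/L)$; for this target ball to contain a full cube $C_{k+1}$ of $F_{m_{k+1}}$ and permit the preimage to sit strictly inside $\overline B_k$, $\rho_k/L$ must exceed the Euclidean half-period of $F_{m_{k+1}}$, roughly $\delta_{m_{k+1}-1}/(2L)$. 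The inequality $\delta_n\leq\delta_{n-1}/(4L\sqrt N)$ from Lemma~\ref{lemita2} supplies exactly the slack needed to reconcile both constraints, just as it did in the closing geometric estimate of Claim~1 of Theorem~\ref{teo Ndim cerr}. The remaining points---$E$ closed with $\mathcal H^h(E)=0$ by countable subadditivity, the finite intersection property via $L=\max_i L_i$, and the upgrade to a perfect set via Lemma~\ref{sin aislados}---are identical to those in Theorem~\ref{teo Ndim cerr}.
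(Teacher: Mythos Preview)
Your proposal is correct and follows essentially the same route as the paper's proof: the same stratification $\mathcal F=\bigcup_{L\ge 2}\mathcal F_L$, the same set $E_L=\bigcup_m\bigcup_{j\le m}\psi_j^{-1}(K_j\cap I_m)$, the same dualisation of Claim~2 with $\psi_j^{-1}$ in place of $\psi_j$, and the same final assembly via Lemma~\ref{sin aislados}. The only cosmetic difference is in the nested-set step: the paper records cubes $C_k\subseteq F_{m_k}$ in the \emph{image} and nests their preimages $g_{m_k}^{-1}(C_k)$, whereas you carry Euclidean balls $\overline B_k$ in the \emph{domain} with the invariant $h_{m_j}(\overline B_k)\subseteq C_j$; since each $\overline B_k$ is just a ball inscribed in the paper's $g_{m_k}^{-1}(C_k)$, the two schemes coincide and hinge on the identical geometric inequality supplied by Lemma~\ref{lemita2}.
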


\begin{proof}

The proof is analogous to the proof of Theorem~\ref{teo Ndim cerr}, defining $\mathcal{F}_L$ as \begin{equation*}\left\{ f_i \in \mathcal{F}: \psi_{r(i)} \text{ associated to } f_i \text{ satisfies that } \psi_{r(i)} \circ f_i|_{D_i}
\text{ is bilipschitz } \left(\frac{1}{L},1 \right) \right\},\end{equation*} and considering $\mathcal{F}=\bigcup_{L \in \N_{\geq 2}} \mathcal{F}_L$.

For each $L\geq 2$, we first construct a closed set  $E_L \subseteq B(0,L)^C$ such that $\mathcal{H}^h(E_L)=0$, and $\bigcap_{1\leq i \leq n}f^{-1}_i(E_L) \neq \emptyset$ for any finite subset $\{ f_1, \cdots f_n \}$ of $\mathcal{F}_L$.

For this, for each $j \in \{1, \cdots , m \}$, there is $k_j \in \N$ such that $\|\psi^{-1}_j (x)\| \geq \max\{m,L \}$ for all $x \in \left( (-k_j,k_j)^N \right)^C \cap \text{Im}(\psi_j)$. We define
\begin{equation*}E_L:=\bigcup_{m \in \N} \bigcup_{j=1}^m \psi^{-1}_j(K_j \cap I_m) \subseteq \left( B(0,L) \right)^C.\end{equation*}

Given $f_1, \cdots , f_n$, by hypothesis we have that $A:=\bigcap_{1\leq i \leq n}(\psi_{r(i)} \circ f_i|_{D_i})^{-1}(I_R)$ contains arbitrarily large balls. We also have that
\begin{equation}\label{cuentita5}
\bigcap_{i=1}^n f^{-1}_i(E_L)\supseteq \bigcap_{m\in \mathcal{M}} g^{-1}_m(F_m) \cap A,
\end{equation}
where we define $g_m:=\psi_{r(i)} \circ f_i|_{D_i}$ for those $m$ that are of the form $(2k-1)2^{r(i)-1}$ with $k \in \N$ and any $r(i)$ with  $1\leq i \leq n$. Let us call $\mathcal{M}$ the countable set of these indexes which we will order increasingly.

To prove that the intersection in equation \eqref{cuentita5} is non empty, we see that
\begin{enumerate}
\item There exists a cube $C_1$ of $F_{m_1}$ such that $C_1\subseteq  g_{m_1} (A)$.
\item Given a cube $C_k$ of $F_{m_k}$ that is contained in $g_{m_k} (A)$, there exists a cube $C_{k+1}$ of $F_{m_{k+1}}$ contained in $g_{m_{k+1}} (A)$, such that $g^{-1}_{m_{k+1}}(C_{k+1}) \subseteq g^{-1}_{m_k} (C_k)$.
\end{enumerate}
In this way, since  $g^{-1}_m$ is locally bilipschitz, we have a sequence of nested non  empty compact sets whose intersection will be non-empty.

Further, $\mathcal{H}^h(E_L)=0$.
Since $E_L\subseteq \bigcup_{j \in \N} \bigcup_{i \in \N} \psi^{-1}_j(K_i)$ it is enough to see that if $\tilde{I}$ is a cube of side length $1$, $j, i \in \N$, we have
\begin{equation*} \mathcal{H}^h_{\delta_m \lambda}(\psi^{-1}_j(F_m)\cap\tilde{I})\longrightarrow_{m \to +\infty} 0 \quad \text{ where } \lambda:= \lambda(\psi_j ,N, \tilde{I}).
\end{equation*}

Noting that
\begin{equation*}M:=\#\{ J \text{ cube of } F_m \text{ such that } \psi^{-1}_j(J)\cap \tilde{I} \neq \emptyset \},\end{equation*}
and reasoning like in Claim 2 of Theorem~\ref{teo Ndim cerr} the result follows.

Finally, taking
 \begin{equation*}E:=\bigcup_{L \in \N_{\geq 2}}E_L\end{equation*} and using Lemma \ref{sin aislados}, the theorem follows.
\end{proof}

\

At this stage, we are ready to prove the result which we were looking for: to construct a set $E$ of zero $H^h$-measure that contains every finite polynomial pattern. We obtain this result for the one variable case as a
particular case of Theorem \ref{patronesNdim}. Then we extend it to polynomials of several variables in Theorem~\ref{patrones polin}.

\begin{theorem}\label{patrones polin 1dim}
Let $h$ be a dimension function, $\mathcal{P}$ the family of non-constant polynomials in one variable with real coefficients. Then there exists a perfect set $E \subseteq \R$, such that $\mathcal{H}^h(E)=0$ and $\bigcap_{1\leq i \leq n} P^{-1}_i(E) \neq \emptyset$ for any finite subset
$\{P_1, \cdots, P_n \}$ in $\mathcal{P}$. In particular $E$ contains any finite polynomial pattern.

If $h$ is taken adequately, $E$ is a perfect set that has Hausdorff dimension zero and contains any finite polynomial pattern.
\end{theorem}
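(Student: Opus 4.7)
The plan is to obtain this as a direct application of Theorem~\ref{patronesNdim} by exhibiting a countable family $\Psi$ that converts every non-constant polynomial into a non-expansive bilipschitz composition on a half-line. Define $\Psi$ to consist of all functions
\begin{equation*}
\psi^{+}_{n,\tilde a,\tilde d}(y) := (y/\tilde a)^{1/n} + \tilde d, \qquad y \in [1,+\infty),
\end{equation*}
together with all functions
\begin{equation*}
\psi^{-}_{n,\tilde a,\tilde d}(y) := -(-y/\tilde a)^{1/n} + \tilde d, \qquad y \in (-\infty,-1],
\end{equation*}
indexed by $n\in\N$ and $\tilde a,\tilde d\in\Q$ with $\tilde a>0$. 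This family is countable. Each such $\psi$ is a homeomorphism of its closed domain onto a closed half-line in $\R$, hence is injective, continuous and closed; its inverse $x\mapsto \pm\tilde a(x-\tilde d)^n$ has non-vanishing derivative on the (relatively) open half-line $\text{Im}(\psi)$, so it is locally bilipschitz there, and clearly $|\psi^{-1}(x)|\to+\infty$ as $|x|\to+\infty$ within $\text{Im}(\psi)$.

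Given $P\in\mathcal P$ of degree $n$ with leading coefficient $a_n$, suppose first $a_n>0$ (the case $a_n<0$ is symmetric, using $\psi^{-}$, since then $-P(x)\to+\infty$ as $x\to+\infty$). Pick rational $\tilde a>a_n$, any rational $\tilde d$, and choose $M$ large enough so that $P$ is strictly increasing on $D:=[M,+\infty)$ and $P(D)\subseteq [1,+\infty)$; then with $\psi:=\psi^{+}_{n,\tilde a,\tilde d}$, the map $\psi\circ P|_{D}$ is well defined and a direct computation gives
\begin{equation*}
(\psi\circ P)'(x) \;=\; \frac{P'(x)}{n\,\tilde a^{1/n}\,P(x)^{(n-1)/n}} \;\longrightarrow\; (a_n/\tilde a)^{1/n} < 1 \quad\text{as }x\to+\infty.
\end{equation*}
Enlarging $M$ if necessary, $(\psi\circ P)'$ takes values in a compact subinterval $[1/L,1]\subseteq(0,1]$ on $D$ for some $L\in\N$, so $\psi\circ P|_{D}$ is non-expansive bilipschitz with constants $(1/L,1)$, verifying the second bullet of Theorem~\ref{patronesNdim}. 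For any finite $P_1,\dots,P_n$ with the associated $\psi_i$ and $D_i=[M_i,+\infty)$, each $\psi_i\circ P_i|_{D_i}$ is a homeomorphism onto a closed unbounded half-line asymptotic to a translate of $\pm x$, so $(\psi_i\circ P_i|_{D_i})^{-1}((-a,a)^{c})$ contains some $[M_i^{\ast},+\infty)$; hence $A_{a,P_1,\dots,P_n}\supseteq[\max_i M_i^{\ast},+\infty)$ contains arbitrarily large balls. Theorem~\ref{patronesNdim} now delivers a perfect set $E\subseteq\R$ with $\mathcal H^h(E)=0$ and $\bigcap_i P_i^{-1}(E)\neq\emptyset$ for every finite $\{P_1,\dots,P_n\}\subseteq\mathcal P$.

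For the final assertion, apply the first part to any dimension function $h$ with $h\prec x^{\alpha}$ for every $\alpha>0$ (for instance $h(x)=(\log(1/x))^{-1}$ for $x$ near $0$); then $\mathcal H^h(E)=0$ forces $\dim_H(E)=0$ while preserving the pattern-containment property. The main obstacle is the uniform control of $(\psi\circ P)'$ on the whole half-line $D$: one must pick $\tilde a$ strictly larger than $a_n$ so that the asymptotic slope is strictly below $1$, and then enlarge $M$ enough to turn this asymptotic bound into a genuine non-expansive estimate on all of $D$ while still keeping a positive lower Lipschitz constant.
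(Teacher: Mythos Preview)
Your proposal is correct and follows essentially the same route as the paper: apply Theorem~\ref{patronesNdim} with $N=1$, taking $\Psi$ to be a countable family of $n$-th root functions so that $\psi\circ P$ has derivative tending to a constant in $(0,1)$, then restrict to a half-line $D=[M,+\infty)$ where this derivative is trapped in a compact subinterval of $(0,1]$. The only cosmetic differences are that the paper omits your translation parameter $\tilde d$ (which you never actually use), normalizes the scaling so that the limiting slope lies in $[1/2,3/4]$, and thereby obtains the uniform bilipschitz constants $(1/4,1)$ for every $P$, whereas you allow the lower constant $1/L$ to depend on $P$; since Theorem~\ref{patronesNdim} decomposes $\mathcal{F}=\bigcup_L\mathcal{F}_L$ anyway, both choices work.
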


\begin{proof}
We consider the family of closed, injective, continuous functions
\begin{align*}
\Psi &:=\{ \psi: [0,+\infty)\to \R, \ \  \psi(x):=q x^\frac{1}{n} : \ n \in \N , \ q \in \Q_{>0} \}\\
 & \cup \{ \psi: (-\infty, 0] \to \R, \ \ \psi(x):= q (-x)^\frac{1}{n} : \ n \in \N, \ q \in \Q_{>0} \};\end{align*}
such that for each $\psi \in \Psi$, we have
\begin{itemize}
\item $\psi^{-1}$ is injective, closed and locally bilipschitz and
\item $\lim_{x \to +\infty} |\psi^{-1}(x)|=+\infty$.
\end{itemize}
This family verifies that for each $P(x):=\sum_{k=0}^n a_k x^k \in \mathcal{P}$ with $a_n \neq 0$
($n \geq 1$), we can choose $\psi \in \Psi$ as
\begin{equation*}
\psi(x) = \begin{cases}  q x^\frac{1}{n} \text{ with } q \in \Q \cap \left[ \frac{1}{2 |a_n|^{\frac{1}{n}}}, \frac{3}{4 |a_n|^{\frac{1}{n}}} \right] & \text{if $a_n > 0$} \\
q (-x)^\frac{1}{n} \text{ with } q \in \Q \cap \left[ \frac{1}{2 |a_n|^{\frac{1}{n}}}, \frac{3}{4 |a_n|^{\frac{1}{n}}} \right] & \text{if $a_n < 0$}.
\end{cases}
\end{equation*}
We choose $M_P \in \N$ such that
 \begin{align} & |P| \text{ is injective in } [M_P-1,+\infty) \text{ and we have} \label{M1}\\
& \frac{1}{4} \leq  |(\psi \circ P|_{[M_P-1, +\infty)})'(x) | \leq 1 \text{ for all } x > M_P-1. \label{M2}
 \end{align}
 In other words, $P|_{[M_P-1,+\infty)}$ is always positive or always negative, injective and $\psi \circ P|_{[M_P-1, +\infty)}$ is well defined, i.e. $\text{Im}(P|_{[M_P-1,+\infty)}) \subseteq \text{Dom}(\psi)$.

Note that we can require the condition on the derivative, since
\begin{equation*}|(\psi \circ P)'(x) |=q \frac{| \pm \sum_{k=0}^{n-1} a_{k+1} \frac{k+1}{n} x^k |}{|(\pm \sum_{k=0}^n a_k x^k)^{1-\frac{1}{n}}|} \longrightarrow_{x \to +\infty} q |a_n|^{\frac{1}{n}} \in \left[ \frac{1}{2}, \frac{3}{4} \right].\end{equation*}
In particular, $\psi \circ P$ is non-expansive bilipschitz with constants $(\frac{1}{4},1)$ in $[M_P,+\infty)$.

Since $\psi \circ P|_{[M_P, +\infty)}$ is injective, we can define its inverse.

Further $\lim_{x \to +\infty} (\psi \circ P)^{-1} (x)=+\infty$.
Moreover
\begin{equation*}\psi \circ P: (M_P-1, +\infty) \to \psi \circ P(M_P-1, +\infty)=\left(\frac{1}{2} |P(M_P-1)|^{\frac{1}{n}}, +\infty\right)\end{equation*}
is open, and so $(\psi \circ P)^{-1}: \left[\frac{1}{2} |P(M_P)|^{\frac{1}{n}}, +\infty\right) \to [M_P, +\infty)$
is continuous.
Finally, since $\psi \circ P|_{[M_P, +\infty)}$ is increasing, $(\psi \circ P|_{[M_P, +\infty)})^{-1}$ is increasing as well.

By Theorem~\ref{patronesNdim} with $N=1$ and $\mathcal{F}=\mathcal{P}$, associating to each $P$ a function $\psi$ and $M_P$ as indicated above, we obtain the desired result.
\end{proof}

In fact, we can extend this result to the case of polynomials in several variables.
\begin{theorem}\label{patrones polin}
Given a dimension function $h$, and given $\mathcal{\tilde{P}}$ the family of non constant polynomials in several variables $\tilde{P}: \R^N \to \R$, then there exists a perfect set $E \subseteq \R$ such that $\mathcal{H}^h(E)=0$, and $\bigcap_{1\leq i \leq n} \tilde{P}^{-1}_i (E)\neq \emptyset$ any finite subset
$\{\tilde{P_1}, \cdots, \tilde{P_n} \}$ in $\mathcal{\tilde{P}}$.
\end{theorem}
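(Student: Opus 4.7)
The plan is to reduce Theorem~\ref{patrones polin} to the one-variable case, Theorem~\ref{patrones polin 1dim}. Take $E \subseteq \R$ to be the perfect $\mathcal{H}^h$-null set produced by Theorem~\ref{patrones polin 1dim}; I will show that this same $E$ witnesses the multivariable statement, so no new construction is required. The properties ``perfect'' and ``$\mathcal{H}^h(E)=0$'' are inherited for free from Theorem~\ref{patrones polin 1dim}; only the intersection property needs to be checked.

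Fix a finite collection $\tilde P_1, \dots, \tilde P_n \in \tilde{\mathcal P}$. The key reduction is to restrict every $\tilde P_i$ to a common line through the origin, $t \mapsto t\vec a$ for a suitable $\vec a \in \R^N \setminus \{0\}$, in such a way that each $P_i(t) := \tilde P_i(t\vec a)$ is a non-constant polynomial in the single variable $t$. Expanding, $\tilde P_i(t\vec a) = \sum_{k=0}^{d_i} h_k^{(i)}(\vec a)\,t^k$, where $h_k^{(i)}$ denotes the degree-$k$ homogeneous component of $\tilde P_i$. Since each $\tilde P_i$ is non-constant, at least one $h_k^{(i)}$ with $k \geq 1$ is not identically zero in $\vec a$; its real zero set $Z_i \subseteq \R^N$ is then a proper real-algebraic variety and therefore has Lebesgue measure zero. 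Picking $\vec a \in \R^N \setminus \bigcup_{i=1}^n Z_i$ (possible since a finite union of null sets is null) makes every $P_i$ a non-constant one-variable polynomial simultaneously.

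Applying Theorem~\ref{patrones polin 1dim} to the finite family $\{P_1, \dots, P_n\} \subseteq \mathcal{P}$ yields some $t_0 \in \R$ with $P_i(t_0) \in E$ for every $i$. The point $\vec x_0 := t_0 \vec a \in \R^N$ then satisfies $\tilde P_i(\vec x_0) = P_i(t_0) \in E$ for every $i$, so $\vec x_0 \in \bigcap_{i=1}^n \tilde P_i^{-1}(E)$, establishing non-emptiness of the intersection. The one genuinely non-trivial step is producing a single direction $\vec a$ that simultaneously preserves non-constancy for all $\tilde P_i$, and this is exactly what the measure-zero argument handles; all the hard analytic content is packaged inside Theorem~\ref{patrones polin 1dim}, so no further construction is needed.
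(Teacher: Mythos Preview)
Your proposal is correct and follows essentially the same approach as the paper: both take the set $E$ from Theorem~\ref{patrones polin 1dim} and reduce the multivariable statement to the one-variable case by restricting all the $\tilde P_i$ to a common line through the origin, chosen so that each restriction remains non-constant. The only cosmetic differences are that the paper parametrizes the line as $t\mapsto(t,\lambda_2 t,\dots,\lambda_N t)$ and checks non-vanishing of the \emph{top-degree} homogeneous part, whereas you allow an arbitrary direction $\vec a$ and check non-vanishing of \emph{some} homogeneous part of positive degree; both existence arguments amount to avoiding the zero set of a nonzero polynomial.
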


 \begin{proof}
We choose $E$ the set given by Theorem \ref{patrones polin 1dim}.
Given $\tilde{P}_1 , \cdots , \tilde{P}_n \in \mathcal{\tilde{P}}$, it will be enough to choose $\lambda_2, \cdots, \lambda_N \in \R$ such that \\
$P_1(t):=\tilde{P_1}(t,\lambda_2 t, \cdots, \lambda_N t), \cdots, P_n(t):=\tilde{P_n}(t,\lambda_2 t, \cdots, \lambda_N t)$ are non constant polynomials in one variable, because by Theorem \ref{patrones polin 1dim} there exists $t \in \R$ such that
\begin{align*}
  &\tilde{P_1}(t,\lambda_2 t, \cdots, \lambda_N t) \in E \\
  &\cdots  \\
  & \tilde{P_n}(t,\lambda_2 t, \cdots, \lambda_N t) \in E
\end{align*}
from which the result will follow.

To choose $\lambda_2, \cdots, \lambda_N$, let $d_k := \text{degree}(\tilde{P_k})$ and we write
\begin{equation*}\tilde{P_k}(x_1, \cdots, x_N)= \sum_{0 \leq j \leq d_k} \sum_{i_1 +\cdots + i_N=j} a^{(k)}_{i_1, \cdots, i_N} x^{i_1}_1 \cdots x^{i_N}_N,
\end{equation*}
where $a^{(k)}_{i_1, \cdots, i_N} \neq 0$ for some $i_1 +\cdots + i_N= d_k$.

Hence
\begin{equation*}P_k(t):=\tilde{P_k}(t,\lambda_2 t, \cdots, \lambda_N t)= \sum_{0 \leq j \leq d_k} t^j \sum_{i_1 +\cdots + i_N=j} a^{(k)}_{i_1, \cdots, i_N} {\lambda_2}^{i_2} \cdots {\lambda_N}^{i_N},\end{equation*}
where $a^{(k)}_{i_1, \cdots, i_N} \neq \emptyset$ for some $i_1 +\cdots + i_N= d_k$.

Since there always exist $\lambda_2, \cdots, \lambda_N \in \R$ satisfying
\begin{equation*}\sum_{i_1 +\cdots + i_N=d_k} a^{(k)}_{i_1, \cdots, i_N} {\lambda_2}^{i_2} \cdots {\lambda_N}^{i_N}\neq 0 \ \forall 1\leq k \leq n,\end{equation*}
they yield the desired construction.
\end{proof}

\begin{remark}
 We were unable to obtain a similar result for {\em vector valued polynomial patterns} using our method of proof. This is due to the fact that we would
need that if $P$ is a vector valued polynomial from $\R^N \rightarrow \R^N$, i.e. $P:=(P_1, \cdots, P_N)$ with $P_j:\R^N \to \R$ and $N\geq 2$, there exists a set $D_P \subseteq \R^N$ that contains arbitrarily large balls, such that $P|_{D_P}$ is injective.

It is not straightforward to characterize a family of vector valued polynomials that would satisfy such a condition, but it is easy to see that not {\em any} family of polynomials will satisfy it, since for example in $\R^2$, the polynomial function $P(x,y)=(x-y, (x-y)^2)$ will never satisfy such a condition.
\end{remark}


\section{Infinite intersections and infinite patterns}

We now turn our attention to countable intersections, rather than finite ones.

\subsection{Small $\mathcal{F}_{\sigma}$ sets with a countable intersection property}
\

\

Given a set $\mathcal{F}$ of functions satisfying certain conditions, we will prove that there exists a small $\mathcal{F}_{\sigma}$ set whithout isolated points such that the images under countable intersections of functions of $\mathcal{F}$ is non-empty.

\begin{theorem}\label{Fsigma Ndim}
Let $h$ be a dimension function,  $\mathcal{F}$ a family of continuous functions from $\R^N$ to $\R^N$
such that there exists a sequence of closed and locally bilipschitz functions $\Psi:= (\psi_j)_j$ defined on closed sets $\Omega_j \subseteq \R^N$, satisfying that there exists $L \in \N_{\geq 2}$ such that
\begin{itemize}
\item for a given $f_i \in \mathcal F$, there exists $\psi_{r(i)} \in \Psi$ such that $f_i \circ \psi_{r(i)}|_{\Omega_{r(i)}}$ is well defined and bilipschitz with constants $(1,L)$;
\item given a countable number of functions $(f_i)_{i \in \Lambda \subseteq \N} \subseteq \mathcal{F}$ the set
\begin{equation*}
A_{\{f_i: \ i \in \Lambda\}}:=\bigcap_{i \in \Lambda} f_i \circ \psi_{r(i)}(\Omega_{r(i)}),
\end{equation*}
contains arbitrarily large balls.
\end{itemize}
Then there exists an $\mathcal{F}_{\sigma}$ set without isolated points $E \subseteq \R^N$, with $\mathcal{H}^h(E)=0$, such that $\bigcap_{i \in \Lambda} f_i(E) \neq \emptyset$ for any countable family $\{f_i \ : i \in \Lambda \} \subseteq \mathcal{F}$.
\end{theorem}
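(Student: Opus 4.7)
My plan is to parallel the construction and proof of Theorem \ref{teo Ndim cerr}, with two main adaptations: the uniform bilipschitz constant $L$ across all of $\mathcal F$ lets me skip the outer decomposition $\bigcup_L E_L$, and to handle countably many functions I extend the nested-cube argument of Claim~1 to a diagonal enumeration over all index pairs.

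\textbf{Construction and easy properties.} Apply Lemma \ref{lemita2} with the given $L$ and $N$ to obtain $(\delta_n)_{n\in\N_0}$. Define the grids $F_n := \tilde F_n^N$ and the Cantor-like sets $K_j := \bigcap_{k\in\N} F_{(2k-1)2^{j-1}}$ as in Theorem \ref{teo Ndim cerr}, and set
$$E := \bigcup_{j\in\N} \psi_j(K_j \cap \Omega_j).$$
Each summand is closed because $\psi_j$ is a closed map on its closed domain $\Omega_j$, so $E$ is $\mathcal F_\sigma$. Enlarge $E$ via Lemma \ref{sin aislados} to remove isolated points while keeping $\mathcal H^h(E)=0$ and the intersection property. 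The measure claim follows from the same computation as Claim~2 of Theorem \ref{teo Ndim cerr}: for each unit cube $\tilde I$, the local bilipschitz property of $\psi_j$ on the compact set $G := \{x : \dist(x, \psi_j^{-1}(\tilde I)) \le \sqrt N\}$ gives $\diam(\psi_j(J)) \le N_2 \delta_n$ for every cube $J$ of $F_n$ meeting $\psi_j^{-1}(\tilde I)$; a counting argument bounds the number of such cubes by $(N_1/\delta_{n-1}+1)^N$, and condition (iii) of Lemma \ref{lemita2} then forces $\mathcal H^h(\psi_j(K_i)\cap\tilde I) = 0$.

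\textbf{Countable intersection.} Fix a countable $(f_i)_{i\in\Lambda}\subseteq \mathcal F$ and an assignment $i\mapsto r(i)$. Enumerate the pairs $\{(i,k) : i\in\Lambda,\,k\in\N\}$ as $(i_l,k_l)_{l\in\N}$ so that $m_l := (2k_l-1)2^{r(i_l)-1}$ is strictly increasing and every pair is eventually visited; set $g_{m_l} := f_{i_l}\circ\psi_{r(i_l)}$. Choose $C_1$, a cube of $F_{m_1}$, inside $g_{m_1}^{-1}(B)$, where $B$ is a sufficiently large ball contained in $A_{\{f_i:\,i\in\Lambda\}}$. Then iterate the Jordan--Brouwer step of Claim~1 in Theorem \ref{teo Ndim cerr}: using the bilipschitz bound $(1,L)$ of $g_{m_{l+1}}$ and the spacing $\delta_{m_{l+1}} \le \delta_{m_l}/(4L\sqrt N)$ from Lemma \ref{lemita2}, extract $C_{l+1}$, a cube of $F_{m_{l+1}}$, with $g_{m_{l+1}}(C_{l+1})\subseteq g_{m_l}(C_l)$. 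The nested compact sets share a point $t^*$. For each fixed $i\in\Lambda$, the preimage $t_i := (f_i\circ\psi_{r(i)})^{-1}(t^*)$ lies in $C_l$ whenever $i_l = i$, and since every pair $(i,k)$ is visited, $t_i \in F_{(2k-1)2^{r(i)-1}}$ for every $k$; hence $t_i \in K_{r(i)}\cap\Omega_{r(i)}$, and $t^* = f_i(\psi_{r(i)}(t_i))\in f_i(E)$.

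\textbf{Main obstacle.} The sensitive step is the diagonal enumeration: strict monotonicity of the $m_l$'s together with visiting every pair $(i,k)$ is automatic when $r$ is injective, but when several $i$'s share the same $\psi_{r(i)} = \psi_j$, the grid indices $\{(2k-1)2^{j-1}\}$ are shared and two pairs compete for the same $m$. The natural resolution is to select the $\psi_{r(i)}$'s distinct whenever $\Psi$ offers enough choices per $f_i$, which is the case in all the motivating examples; in full generality, one distributes the shared indices through a finer bookkeeping and verifies that enough grids are still visited per $i$ to place $t_i$ inside $K_{r(i)}$. This enumeration step is the only essentially new ingredient beyond the finite-case proof.
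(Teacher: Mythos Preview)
Your construction and overall strategy match the paper's, but the gap you flag as the ``main obstacle'' is real and your suggested resolution (``finer bookkeeping'') does not work: if infinitely many $i$'s share the same $r(i)=j$, each of the preimage points $t_i$ must lie in \emph{every} level $F_{(2k-1)2^{j-1}}$, yet each grid index $m$ can be visited only once in the strictly increasing sequence $(m_l)$. No redistribution of the shared indices can force all of the $t_i$'s into all levels simultaneously, so with your set $E=\bigcup_j\psi_j(K_j\cap\Omega_j)$ the argument genuinely fails in this case.

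The paper fixes this with one simple change: take the larger set
\[
E:=\bigcup_{j\in\N}\bigcup_{i\in\N}\psi_j(K_i),
\]
and in the intersection argument index the grids by the \emph{function index} $i\in\Lambda$ rather than by $r(i)$, setting $g_m:=f_i\circ\psi_{r(i)}$ for $m=(2k-1)2^{i-1}$. Since $(i,k)\mapsto(2k-1)2^{i-1}$ is a bijection $\N\times\N\to\N$, there are no collisions whatsoever, and the double union guarantees $\psi_{r(i)}(K_i)\subseteq E$, so $f_i(E)\supseteq (f_i\circ\psi_{r(i)})(K_i)=\bigcap_k g_{(2k-1)2^{i-1}}(F_{(2k-1)2^{i-1}})$ by injectivity of $f_i\circ\psi_{r(i)}$. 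Everything else in your write-up---the Jordan--Brouwer nested-cube step, the $\mathcal H^h$ estimate via Lemma~\ref{lemita2}, and the appeal to Lemma~\ref{sin aislados}---carries over unchanged.
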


\begin{proof}
The proof of this theorem, is analogous to the one of Theorem~\ref{teo Ndim cerr}. We first start constructing an $\mathcal{F}_{\sigma}$ set $E \subseteq \R^N$, such that $\bigcap_{i\in \Lambda} f_i (E) \neq \emptyset$ for any countable subset $(f_i)_{i \in \Lambda \subseteq \N} \subseteq \mathcal{F}$.

Consider
\begin{equation*}E:=\bigcup_j \bigcup_i  \psi_j (K_i) \ \in \mathcal{F}_{\sigma}.\end{equation*}

By hypothesis, to each $f_i$ we have a corresponding $\psi_{r(i)}$ such that $f_i \circ \psi_{r(i)}$ is injective in $\Omega_{r(i)}$. As before we have
\begin{equation}\label{cuentitab}
\bigcap_{i \in \Lambda} f_i(E) \supseteq  \bigcap_{m \in \mathcal{M}} g_m (F_m),
\end{equation}
where again $g_m:=f_i \circ \psi_{r(i)}|_{\Omega_{r(i)}}$ for those indexes $m$ such that  $m=(2k-1)2^{i-1}$ with $k \in \N$ and $i \in \Lambda$ and we denote by $\mathcal{M}$ the countable set of those indexes ordered increasingly.

Since by hypothesis $A:=A_{\{f_i: \ i \in \Lambda \}}:=\bigcap_{i \in \Lambda } f_i \circ \psi_{r(i)}(\Omega_{r(i)})$ contains arbitrarily large balls, $\text{Im}(g_m)$ contains arbitrarily large balls for $m \in \mathcal{M}$.

To see that the intersection in equation \eqref{cuentitab} is non-empty, we see that
\begin{enumerate}
\item There exists a cube $C_1$ of $F_{m_1}$ such that $C_1\subseteq g^{-1}_{m_1}(A)$.
\item Given a cube $C_n$ of $F_{m_n}$ contained in $g^{-1}_{m_n} (A)$, there exists a cube
$C_{n+1}$ of $F_{m_{n+1}}$ contained in $g^{-1}_{m_{n+1}} (A)$ such that $g_{m_{n+1}}(C_{n+1})\subseteq g_{m_n} (C_n)$.
\end{enumerate}
Since $g_m$ is bilipschitz, we constructed a sequence of non-empty nested compact sets whose intersection is non-empty.

Finally note, that by an argument similar to the one used in the proof of Theorem~\ref{teo Ndim cerr}, we can show that $\mathcal{H}^h(E)=0$.
By Lemma \ref{sin aislados} the set $E$ can be taken without isolated points.
\end{proof}



One application of the Theorem \ref{Fsigma Ndim} is again the fact that one can obtain a set of zero dimensional Hausdorff measure, such that it contains any countable set of $\R^N$ {\em up to translations}. Precisely, looking at $\mathcal{F}$ as the set of all translations, $\Psi$ as the set of all rational translations and $\Omega_j = \R^N$ for all $j$ we can construct an $\mathcal{F}_{\sigma}$ set without isolated points $E \subseteq \R^N$ of Hausdorff dimension zero, such that for any countable set $A$ of $\R^N$, there exists $z_A \in \R^N$ such that $A+z_A \subseteq E$.

\begin{corollary}
Given  $L \in \N_{\geq 2}$, the following family satisfies the hypotesis of Theorem \ref{Fsigma Ndim}
 \begin{align*}
 \mathcal{F}_L:=\{ f: \R^N \to \R^N & : \exists \ a>0, \ \exists \  b\in (0,La) \text{ such that }\\
 & \ \ a\|x-y\| \leq \|f(x)-f(y)\| \leq b\|x-y\| \ \forall x,y \}.\end{align*}
\end{corollary}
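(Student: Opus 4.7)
The proof will proceed by exhibiting an explicit countable family $\Psi$ of closed locally bilipschitz functions that works uniformly for every $f \in \mathcal{F}_L$. The natural choice, mimicking Corollary \ref{corol bilip N dim}, is to take the family of positive rational dilations:
\begin{equation*}
\Psi := \{ \psi_q : \R^N \to \R^N,\ \psi_q(x) = qx \ : \ q \in \Q_{>0} \},
\end{equation*}
each defined on the closed domain $\Omega_q = \R^N$. Each $\psi_q$ is a linear homeomorphism of $\R^N$, hence closed, and globally (and therefore locally) bilipschitz with constants $(q,q)$.

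The first step is to match each $f \in \mathcal{F}_L$ with an appropriate $\psi_q$. Given $f$ with constants $(a,b)$ and $b < La$, a direct computation shows that $f \circ \psi_q$ is bilipschitz with constants $(qa, qb)$. To force these to lie in $[1,L]$, one needs $q \in \left[\frac{1}{a}, \frac{L}{b}\right]$; since $b < La$ one has $\frac{1}{a} < \frac{L}{b}$, so this interval contains a positive rational. Pick any such $q = q(f) \in \Q_{>0}$; then $f \circ \psi_{q(f)}|_{\R^N}$ is well defined (trivially) and bilipschitz with constants $(1,L)$, as required by the first bullet of Theorem \ref{Fsigma Ndim}.

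The second step is to verify the ``arbitrarily large balls'' condition, which in this setting reduces to showing that each image $f_i \circ \psi_{r(i)}(\R^N)$ equals the whole of $\R^N$. For this I will invoke the standard fact that any bilipschitz map $f : \R^N \to \R^N$ is a bijection onto $\R^N$: injectivity follows from the lower bound; the image $f(\R^N)$ is closed because the lower bound makes $f$ a proper map (Cauchy sequences in $f(\R^N)$ pull back to Cauchy sequences in $\R^N$); and the image is open by the Invariance of Domain theorem. Since $\psi_{r(i)}$ is already a bijection of $\R^N$, this gives $f_i \circ \psi_{r(i)}(\R^N) = \R^N$ for every $i$, so for any countable $\Lambda$,
\begin{equation*}
A_{\{f_i:\ i \in \Lambda\}} = \bigcap_{i \in \Lambda} f_i \circ \psi_{r(i)}(\R^N) = \R^N,
\end{equation*}
which trivially contains arbitrarily large balls.

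The only subtle point is the surjectivity of bilipschitz self-maps of $\R^N$, which is what forces us to work on all of $\R^N$ as the domain $\Omega_q$ rather than a smaller closed set; beyond that, the argument is essentially a one-line application of the hypothesis $b/a < L$ to select the dilation constant $q$. With $\Psi$ chosen as above and the assignment $f \mapsto \psi_{q(f)}$ defined as described, both bullets of Theorem \ref{Fsigma Ndim} are verified, and the corollary follows.
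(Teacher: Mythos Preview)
Your proof is correct and follows essentially the same approach as the paper: the same family $\Psi$ of positive rational dilations, the same choice of $q \in \Q \cap [\tfrac{1}{a}, \tfrac{L}{b}]$ (the paper writes $\lambda_r \in \Q \cap [\tfrac{1}{c_r}, \tfrac{L}{d_r}]$), and the same conclusion that $f \circ \psi_q(\R^N) = \R^N$ from bijectivity of bilipschitz self-maps. Your justification of the surjectivity step (closedness via properness plus openness via Invariance of Domain) is a bit more explicit than the paper's, which simply asserts bijectivity.
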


\begin{proof}
Let $\Psi := \{ \psi: \R^N \to \R^N : \psi(x):=\lambda x \text{ con } \lambda \in \Q_{>0} \}$ be a countable family of closed, locally bilipschitz functions defined on $\R^N$, and let $\{f_r\}_{r\in \Lambda}$ a countable family in $\mathcal F$. For each $f_r$ there exist
$c_r, d_r >0$ such that $d_r < L c_r$ and
 \begin{equation*}c_r \|x-y\| \leq \|f_r (x)-f_r (y)\| \leq d_r \|x-y\| \ \forall x, y \in \R^N.\end{equation*}
Taking
$\psi_{r}(x) :=\lambda_{r}x$ with $\lambda_{r} \in \Q \cap [\frac{1}{c_r}, \frac{L}{d_r}]$, then $\psi_r \in \Psi$ and
\begin{equation}
\begin{array}{lll}
\|x-y\| &\leq c_r \|\psi_{r}(x-y)\| \leq \|f_r \circ \psi_{r}(x)- f_r \circ \psi_{r}(y)\| \\
&\leq d_r \|\psi_{r}(x-y)\| \leq d_r \frac{L}{d_r}\|x-y\| =L \|x-y\|.
\end{array}
\end{equation}
Further, since $f_r \circ \psi_r: \R^N \to \R^N$ is bilipschitz, it is bijective and therefore $f_r \circ \psi_r (\R^N)= \R^N$ and hence contains arbitrarily large balls, and therefore the hypothesis of Theorem~\ref{Fsigma Ndim} are satisfied.
\end{proof}

\begin{corollary}
The family $\mathcal F$ of all invertible affine transformations from $\R^N$ to $\R^N$, i.e.
\begin{equation*}\mathcal{F}:=\{ f: \R^N \to \R^N  \ f(x):=Ax+b: \ A \in \R^{N\times N} \text{ invertible, } b\in \R^N \}.\end{equation*}
satisfies the hypothesis of Theorem \ref{Fsigma Ndim}.
\end{corollary}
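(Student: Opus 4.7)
The plan is to mimic the preceding corollary but now take the family $\Psi$ to consist of linear maps given by rational invertible \emph{matrices}, rather than rational scalar multiples of the identity. The extra flexibility lets us ``straighten out'' an arbitrary invertible matrix $A$, not merely rescale it, which is the essential obstruction, since the ratio $\sigma_{\max}(A)/\sigma_{\min}(A)$ for an arbitrary invertible $A$ may be arbitrarily large, so no scalar $\psi$ could yield a uniform $L$.

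Concretely, set
\begin{equation*}
\Psi := \bigl\{\psi_M \colon \R^N \to \R^N \ : \ \psi_M(x) = Mx,\ M \in \Q^{N\times N} \cap GL_N(\R)\bigr\},
\end{equation*}
with $\Omega_M := \R^N$. This family is countable, and each $\psi_M$ is a linear bijection of $\R^N$, hence a closed map, and globally (a fortiori locally) bilipschitz. Fix $L := 2$. For any $f(x) = Ax+b \in \mathcal F$ the composition $f \circ \psi_M(x) = (AM)x+b$ is globally bilipschitz with lower and upper constants equal to the smallest and largest singular values of $AM$, which I will denote $\sigma_{\min}(AM)$ and $\sigma_{\max}(AM)$. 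Since $GL_N(\R)$ is open in $\R^{N\times N}$, $\Q^{N\times N}$ is dense in $\R^{N\times N}$, and the singular values depend continuously on the matrix entries, I can choose $M_0 \in \Q^{N\times N}\cap GL_N(\R)$ close enough to $A^{-1}$ that $\sigma_{\min}(AM_0),\sigma_{\max}(AM_0) \in (2/3,\,4/3)$. Setting $M := \tfrac{3}{2} M_0$ (still a rational invertible matrix) yields
\begin{equation*}
\sigma_{\min}(AM) \ge \tfrac{3}{2}\cdot\tfrac{2}{3} = 1, \qquad \sigma_{\max}(AM) \le \tfrac{3}{2}\cdot\tfrac{4}{3} = 2 = L,
\end{equation*}
so $f\circ\psi_M|_{\R^N}$ is bilipschitz with constants $(1,L)$, verifying the first bullet of Theorem \ref{Fsigma Ndim}.

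The second bullet is automatic: both $f$ and $\psi_M$ are bijections of $\R^N$ (the former since $A$ is invertible, the latter since $M$ is), hence $f\circ\psi_M(\R^N)=\R^N$. Consequently, for any countable subfamily $(f_i)_{i\in\Lambda}\subseteq\mathcal F$,
\begin{equation*}
A_{\{f_i:\,i\in\Lambda\}} \,=\, \bigcap_{i\in\Lambda} f_i\circ\psi_{r(i)}(\Omega_{r(i)}) \,=\, \R^N,
\end{equation*}
which trivially contains balls of arbitrarily large radius. Thus both hypotheses of Theorem \ref{Fsigma Ndim} are satisfied, and the corollary follows.

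The only nontrivial step is the first bullet: securing, by a single rational matrix $M$, that the lower singular value of $AM$ is pushed above $1$ while the upper one stays below the \emph{same} prescribed $L$, uniformly in $A$. This is resolved by the density-plus-rescaling argument above, where approximating $A^{-1}$ makes $AM_0$ close to the identity, after which a fixed rational dilation (here $3/2$) provides the required margin on both sides of $1$ simultaneously.
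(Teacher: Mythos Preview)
Your proof is correct and follows essentially the same approach as the paper: both take $\Psi$ to be linear maps given by rational matrices, use density to pick $M$ (respectively $C$) so that $AM$ is close to a fixed multiple of the identity, and conclude that $f\circ\psi$ is bilipschitz with constants $(1,2)$, while surjectivity makes the second bullet immediate. The only cosmetic differences are that you phrase the estimate in terms of singular values and split the approximation into two steps (approximate $A^{-1}$, then dilate by $3/2$), whereas the paper directly picks $C\in\Q^{N\times N}$ with $\|AC-\tfrac{3}{2}I\|<\tfrac{1}{2}$; your explicit restriction to invertible $M$ is in fact cleaner, since the hypotheses of Theorem~\ref{Fsigma Ndim} require each $\psi_j$ to be locally bilipschitz.
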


\begin{proof}
Consider the countable family
\begin{equation*}\Psi:=\{ \psi:\R^N \to \R^N \ \psi(x):=Cx \text{ with } C \in \Q^{N\times N}\}=(\psi_j)_{j \in \N}.\end{equation*}

For each $f_i(x) = Ax+b \in \mathcal{F}$, since $A$ is invertible, by the density of the matrices with rational entries we can find $C\in \Q^{N \times N}$ such that $\| AC-\frac{3}{2}I \| < \frac{1}{2}$. Therefore
\begin{equation*}\| x \| \leq \|AC x \| \leq 2 \| x\| \text{ for all }x \in \R^N.\end{equation*}
Associating to each $f_i$ the function $\psi_{r(i)}(x) = Cx$, we have that $f_i \circ \psi_{r(i)}$ is bilipschitz with constants  $(1,2)$ in all of $\R^N$.

Further, $\bigcap_{i \in \Lambda}f_i \circ \psi_{r(i)}(\R^N)=\R^N$. The result follows from Theorem~\ref{Fsigma Ndim}.
\end{proof}

\subsection{Small $\mathcal{F}_{\sigma}$ sets containing countable any pattern}
\

\

In this section we will concentrate on preimages rather than images. We will show that given a set $\mathcal{F}$ of functions satisfying certain conditions, there exists a small $\mathcal{F}_{\sigma}$ set whithout isolated points such that the preimages under countable intersections of functions of $\mathcal{F}$ is non-empty.


\begin{theorem}\label{Fsigma patr Ndim}
Let $h$ be a dimension function. Let $\mathcal{F}$ be a family of continuous functions defined on a closed set $D \subseteq \R^N$ containing arbitrarily large balls, such that there exist $L \in \N_{\geq 2}$ and a countable family $\Psi:=\{ \psi_j \}_{j \in \N}$ of continuous, injective and closed  functions defined on closed sets $\Omega_j \subseteq \R^N$, such that $\psi_j^{-1}$ are locally  bilipschitz, and for each $f \in \mathcal{F}$, there exists $\psi_j \in \Psi$ such that $\psi_j \circ f$ is well defined and is bilipschitz of constants  $(\frac{1}{L},1)$ on $D$.

Then there exists an  $\mathcal{F}_{\sigma}$ set without isolated points $E \subseteq \R^N$, with $\mathcal{H}^h(E)=0$, such that $\bigcap_{i \in \Lambda} f^{-1}_i(E) \neq \emptyset$ for any $(f_i)_{i \in \Lambda} \subseteq \mathcal{F}$ countable subset. In other words,  $E$ contains {\em any} countable pattern of $\mathcal{F}$.
\end{theorem}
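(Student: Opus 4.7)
The plan is to combine the $\mathcal{F}_{\sigma}$ construction of Theorem~\ref{Fsigma Ndim} with the preimage technique of Theorem~\ref{patronesNdim}. Apply Lemma~\ref{lemita2} with the given $L$ to obtain $(\delta_n)_{n \in \N_0}$, and set $F_n := \tilde{F}_n^N$ with $\tilde{F}_n \subseteq \R$ a union of equally spaced closed intervals of length $\delta_n$ and gaps $\delta_{n-1}/(4L\sqrt{N})$, and $K_j := \bigcap_{k \in \N} F_{(2k-1) 2^{j-1}}$. The candidate set is
\[ E := \bigcup_{j \in \N} \bigcup_{i \in \N} \psi_j^{-1}(K_i), \]
which is $\mathcal{F}_{\sigma}$ because each $\psi_j^{-1}(K_i)$ is closed by continuity of $\psi_j$. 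To prove $\mathcal{H}^h(E) = 0$, it suffices to bound $\mathcal{H}^h(\psi_j^{-1}(F_n) \cap \tilde{I})$ for each unit cube $\tilde{I}$: Observation~\ref{lemita}(1) applied to $\psi_j^{-1}$ on a suitable compact neighbourhood of $F_n \cap \psi_j(\tilde{I})$ yields bilipschitz constants depending only on $\psi_j$, $\tilde{I}$ and $N$, and the counting argument of Claim~2 in the proof of Theorem~\ref{teo Ndim cerr} then gives an estimate of the form $(N_1/\delta_{n-1}+1)^N h(N_2 \delta_n)$, which tends to $0$ by property~(3) of Lemma~\ref{lemita2}.

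Given a countable family $\{f_i\}_{i \in \Lambda} \subseteq \mathcal{F}$ (with $\Lambda \subseteq \N$), associate $\psi_{r(i)} \in \Psi$ so that $g_i := \psi_{r(i)} \circ f_i$ is bilipschitz with constants $(1/L, 1)$ on $D$. Since $f_i^{-1}(E) \supseteq g_i^{-1}(K_i)$, one has
\[ \bigcap_{i \in \Lambda} f_i^{-1}(E) \supseteq \bigcap_{i \in \Lambda} \bigcap_{k \in \N} g_i^{-1}\!\left(F_{(2k-1)2^{i-1}}\right) = \bigcap_{m \in \mathcal{M}} g_m^{-1}(F_m), \]
where the indices $m = (2k-1)2^{i-1}$ are mutually distinct by uniqueness of binary factorisation, $\mathcal{M}$ denotes this countable set in increasing order, and $g_m := g_i$ for the corresponding $i$. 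Using that $D$ contains arbitrarily large balls, pick $y_0$ and $R_0$ with $B(y_0, R_0) \subseteq D$ and $R_0$ suitably large (see below). The plan is then to build inductively cubes $C_k$ of $F_{m_k}$ together with points $y_k \in D$ with $g_{m_k}(y_k) = x_k$, the centre of $C_k$, so that the compact sets $\tilde{C}_k := g_{m_k}^{-1}(C_k) \cap \overline{B(y_0, R_0)}$ are nested. The inductive step is the preimage analogue of the second item of Claim~1 in Theorem~\ref{teo Ndim cerr}: the non-expansive upper bound~$1$ of $g_{m_k}$ gives $B(y_k, \delta_{m_k}/2) \cap D \subseteq g_{m_k}^{-1}(C_k)$; the Jordan--Brouwer argument with lower constant $1/L$ applied to $g_{m_{k+1}}$ on $B(y_k, \delta_{m_k}/2)$ shows that $\overline{B(g_{m_{k+1}}(y_k), \delta_{m_k}/(2L))}$ lies in the image, and Lemma~\ref{lemita2}(2) places a cube $C_{k+1}$ of $F_{m_{k+1}}$ inside this ball; the lower bilipschitz bound finally gives $g_{m_{k+1}}^{-1}(C_{k+1}) \subseteq B(y_k, \delta_{m_k}/2)$, so that $\tilde{C}_{k+1} \subseteq \tilde{C}_k$.

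The main technical obstacle is ensuring the whole induction stays inside $D$. The drift $\|y_{k+1} - y_k\| \leq \delta_{m_k}/2$ telescopes and is dominated by $\sum_n \delta_n$, which is bounded by a small absolute constant because of the super-geometric decay $\delta_n \leq \delta_{n-1}/(4L\sqrt{N})$ from Lemma~\ref{lemita2}(2); choosing $R_0$ larger than this bound yields $B(y_k, \delta_{m_k}/2) \subseteq B(y_0, R_0) \subseteq D$ for every $k$, legitimising every invocation of Jordan--Brouwer above. Compactness then produces a common point $t \in \bigcap_k \tilde{C}_k \subseteq \bigcap_{m \in \mathcal{M}} g_m^{-1}(F_m)$, witnessing $\bigcap_{i \in \Lambda} f_i^{-1}(E) \neq \emptyset$. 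Finally, invoke Lemma~\ref{sin aislados} to enlarge $E$ to an $\mathcal{F}_{\sigma}$ superset without isolated points that still has $\mathcal{H}^h$-measure zero; the intersection property is inherited by supersets, completing the proof.
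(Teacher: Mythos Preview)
Your argument is correct and follows essentially the same route as the paper: the same set $E=\bigcup_{j,i}\psi_j^{-1}(K_i)$, the same re-indexing $m=(2k-1)2^{i-1}$, the same nested-cube scheme for $g_m^{-1}(F_m)$, the same measure estimate via Observation~\ref{lemita} and Lemma~\ref{lemita2}, and the same appeal to Lemma~\ref{sin aislados}. Your drift bookkeeping, bounding $\sum_k\|y_{k+1}-y_k\|$ by the geometric series $\sum_n\delta_n$, is a welcome detail that the paper leaves implicit when it asserts item~B); the one place to tighten is the base step, where you should locate $C_1$ inside $g_{m_1}\bigl(B(y_0,R_0-c)\bigr)$ for a fixed margin $c>\sum_n\delta_n$ (still possible since $D$ contains arbitrarily large balls), so that $y_1$ already sits well inside $B(y_0,R_0)$ and the subsequent telescoping genuinely yields $B(y_k,\delta_{m_k}/2)\subseteq B(y_0,R_0)$.
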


\begin{proof}

Again we follow the same scheme of proof.

We first show that there exists an $\mathcal{F}_{\sigma}$ set $E \subseteq \R^N$, such that
$\bigcap_{i \in \Lambda} f^{-1}_i(E) \neq \emptyset$ for any $(f_i)_{i \in \Lambda} \subseteq \mathcal{F}$ and then show that this set satisfies that  $\mathcal{H}^h(E) = 0.$

For this, consider
\begin{equation*}E:= \bigcup_j \bigcup_k  \psi^{-1}_j(K_k),\end{equation*}
which, by the hypothesis on $\Psi$, is $\mathcal{F}_{\sigma}$.
For $i \in \Lambda$ we have $j(i)$ such that $\psi_{j(i)} \circ f_i$ is non-expansive bilipschitz on $D$.
We have
\begin{equation}\label{inters patr 1 dim}
\bigcap_{i \in \Lambda} f^{-1}_i (E) \supseteq \bigcap_{m \in \mathcal{M}} g^{-1}_m (F_m),
\end{equation}
where (as before) $g_m:=\psi_{j(i)} \circ f_i$ if $m=(2k-1)2^{i-1}$ with $i \in \Lambda$ and $k \in \N$ and $\mathcal{M}:=(m_n)_{n \in \N}$ is the set of those indexes, ordered increasingly.

To see that the intersection in equation \eqref{inters patr 1 dim} is non empty, we argue as before noting that
\begin{itemize}
\item [A)] there exists a cube  $C_1$ of $F_{m_1}$, contained in $\text{Im}(g_{m_1})=g_{m_1}(D)$.
\item [B)] given a cube $C_n$ of $F_{m_n}$, contained in $\text{Im}(g_{m_n})=g_{m_n}(D)$; there exists a cube $C_{n+1}$ of $F_{m_{n+1}}$ which is contained in $g_{m_{n+1}}(D)$ such that
\begin{equation*}g^{-1}_{m_{n+1}}(C_{n+1}) \subseteq g^{-1}_{m_n} (C_n)\end{equation*}
\end{itemize}
We therefore constructed a sequence of compact nested sets (since $g_m$ is bilipschitz) and therefore
the intersection is non-empty.

Finally, analogously as in the proof of Theorem~\ref{patronesNdim} the set $E$  satisfies that $\mathcal{H}^h (E)=0$.
By Lemma \ref{sin aislados} the set $E$ can be taken without isolated points.
\end{proof}

\begin{remark}
We were unable to obtain a result for a countable number of polynomials even in $\R$ using our technique of proof of Theorem~\ref{Fsigma patr Ndim}.

This comes from the fact that if we want a result about countable intersections, using the cited theorem, we would need to find a domain $D$ such that for each polynomial $P$ there exists a function $\psi$ such that $\psi \circ P$ is bilipschitz on $D$. Hence, $P$ would have to be injective on $D$, independently of the polynomial $P$, which is clearly impossible.
\end{remark}

\section*{Acknowledgements}

We thank the anonymous referees for their comments that helped to improve the presentation of this paper, in particular pointing out an observation wich simplified our earlier argument (Lemma \ref{sin aislados}).



\end{document}